\documentclass[twoside]{article}

\usepackage{listings}

\usepackage{geometry}
 \geometry{
 a4paper,
 left=32mm,
 right=32mm,
 top=40mm,
 }

\usepackage{scrhack}

\setlength{\voffset}{5mm}


\usepackage[utopia]{mathdesign}

\usepackage{dsfont, amsthm}


\newcommand{\SetFigFont}[2]{}

\usepackage[utf8]{inputenc}

\usepackage[american]{babel}



\usepackage{color}
\usepackage{array}
\usepackage{booktabs}
\usepackage[final]{graphicx}
\usepackage{enumitem}
\usepackage{url}
\usepackage{varwidth}
\usepackage{hhline}


\usepackage{amsmath}
\usepackage{mathtools}


\usepackage[
  normal,
  font={small,color=black},
  labelfont=bf,
  margin=2em,
  labelsep=period]{caption}[2008/04/01]

\usepackage{caption}

\interfootnotelinepenalty=10000


\usepackage[square,numbers,super,sort&compress]{natbib}

\makeatletter
\renewcommand\NAT@citesuper[3]{\ifNAT@swa
\if*#2*\else#2\NAT@spacechar\fi
\unskip\kern\p@\textsuperscript{\NAT@@open#1\if*#3*\else,\NAT@spacechar#3\fi\NAT@@close}%
   \else #1\fi\endgroup}
\makeatother
\usepackage{makeidx}         
\usepackage{graphicx}        
\usepackage{multicol}        
\usepackage[bottom]{footmisc}


\usepackage{array}

\providecommand{\keywords}[1]{\textbf{Keywords} #1}

\newtheorem{theorem}{Theorem}[section]
\newtheorem{definition}{Definition}[section]
\newtheorem{lemma}{Lemma}[section]

\usepackage{courier}

\lstset{language=C++, frame=single}
\lstset{basicstyle=\footnotesize\ttfamily,breaklines=true}
\lstset{numbers=left,
    stepnumber=1}

 \citestyle{acmauthoryear}
\usepackage{booktabs} 

\usepackage[ruled]{algorithm2e} 





\usepackage{fancyhdr}
\pagestyle{fancy}
\fancyhead[LO]{S. Frei, T. Richter and T. Wick}
\fancyhead[RE]{}
\fancyhead[LE]{Implementation of a locally modified finite element method}
\fancyhead[RO]{}

\usepackage{amsmath}

\begin{document}
\title{On the implementation of a locally modified finite element method for interface problems in deal.II}

\author{
Stefan Frei\thanks{Department of Mathematics, University College London, UK (s.frei@ucl.ac.uk)}
\and Thomas Richter\thanks{Institut f\"ur Analysis und Numerik, Universit\"at Magdeburg, Germany 
(thomas.richter@ovgu.de)}, 
\and Thomas Wick\thanks{Institut f\"ur Angewandte Mathematik, Leibniz Universit\"at Hannover, 
Germany (thomas.wick@ifam.uni-hannover.de)}}

\date{}
\maketitle



\begin{abstract}
In this work, we describe a simple finite element approach 
that is able to resolve weak discontinuities in interface problems accurately.
The approach is based on a fixed patch mesh consisting of quadrilaterals, 
that will stay unchanged independent of the position of the interface. Inside the
patches we refine once more, either in eight triangles or in four quadrilaterals, 
in such a way that the interface is locally resolved.
The resulting finite element approach can be considered a fitted 
finite element approach. In our practical implementation, we do not construct 
this fitted mesh, however. Instead, the local degrees of freedom are included 
in a parametric way in the finite element space, or to be more precise in the 
local mappings between a reference patch and the physical patches.
We describe the implementation in the open source C++ finite element 
library deal.II in detail and present two 
numerical examples to illustrate the performance of the approach. 
Finally, detailed studies of the behavior of iterative linear solvers
complement this work.
\end{abstract}

%
%




%
%

\keywords{Locally modified finite elements; fitted finite elements; interface problems; C++; deal.II; implementation}

\maketitle

\section{Introduction}

In this paper, we consider interface problems, where the solution is continuous on a
domain  $\Omega \subset \mathbb{R}^2$, but its normal derivative may have a jump in normal
direction over an interior interface. Problems of this kind 
arise for example in fluid-structure interaction, 
multiphase flows, multicomponent structures and in many other
configurations where multiple physical phenomena interact.
All these examples have in common that the interface between
the two phases is moving and may be difficult to capture due to small
scale features. 



{ If the interface is not resolved by the finite element mesh,}
the accuracy of the finite element
approach might decrease severely, see e.g.~\cite{Babuska1970}. 
For simple elliptic interface problem with 
jumping coefficients, it has been shown, that
optimal convergence can be
recovered by a harmonic averaging of the diffusion
constants~\cite{TikhonovSamarskii1962}, \cite{ShubinBell1984}. 

For more complex couplings, e.g.$\,$ fluid-structure interactions, where two entirely different
equations interact with each other, the list of possible { discretisation techniques} that yield optimal order
can be split roughly in two groups.

The first class of approaches consists of so-called fitted finite element methods, where 
the meshes are constructed in such a way that the interface is sufficiently
resolved, see~\cite{Babuska1970, FeistauerSobotikova1990,
  Zenisek1990,   BrambleKing1996, BastingPrignitz2013}. 
If the interface is moving,
curved or has small scale features, the 
repeated generation of fitted finite element meshes
can exceed the feasible effort, however. In non-stationary problems, the projection of
previous iterates to the new mesh, brings along further difficulties and sources of error.
Further developments are based on local modifications
of the finite element mesh, that only alter mesh elements close to the
interface~\cite{Boergers1990, XieItoLiToivanen2008, Fang2013, GawlikLew2014}. 

An alternative approach is based on unfitted finite elements,
where the mesh is fixed and does not resolve the interface. Here,
proper accuracy is gained by local modifications or enrichment of the
finite element basis. Prominent examples for these methods are
the extended finite element method
(XFEM~\cite{MoesDolbowBelytschko1999}), the generalised finite element  
method~\cite{BabuskaBanarjeeOsborn2004,BaBa12} or the unfitted Nitsche
method by \cite{HansboHansbo2002, HansboHansbo2004}. 
Based on the latter works, so-called cut
finite elements have been developed,
see for instance~\cite{BastianEngwer}, \cite{Burmanetal2015}, 
\cite{Lehrenfeld4d}, \cite{FidkowskiDarmofal2007}.
All these enrichment 
methods are well analysed and show the correct order of
convergence. One drawback of the enrichment methods is a complicated structure
that requires local modifications in the finite element spaces leading
to a variation in the connectivity of the system matrix and number of
unknowns.

In this article, we use a simple approach that is based on a fixed \textit{patch
mesh} consisting of quadrilaterals and  will stay unchanged independent of the position of the interface. Inside the
patches we refine once more, either in eight triangles or in four quadrilaterals, in such a way that the interface is locally resolved.
In this sense the resulting finite element approach can be considered a \textit{fitted} finite element approach. 
{This approach has first been proposed in~\cite{FreiRichter2014}.}
In our practical
implementation, we do however not construct this fitted mesh explicitly. 
Instead, the local degrees of freedom are included in a parametric
way in the finite element space, or to be more precise in the local mappings between a reference patch and the physical patches.
 
The drawback of this approach is that the condition number of the resulting system matrices might be unbounded, when the 
interface approaches certain vertices or mesh lines. This problem can however be solved by constructing
a scaled hierarchical basis of the finite element space. Using this basis the approach can be viewed as a simple 
enrichment method as well, where the enrichment consists of the standard Lagrangian basis functions on the fine scale.
 
The mathematical details, including a complete analysis of the discretisation error and the condition number 
of the system matrix
have already been published in~\cite{FreiRichter2014}, \cite{FreiDiss} and~\cite{RichterBuch}.
Later on, related approaches on triangular patches have been developed by~\cite{HolmFSIband} and by~\cite{GanglLanger}. 
Furthermore, the approach has been applied by the authors to simulate fluid-structure interaction problems with 
large deformations
in~\cite{FrRiWi14b,FreiRichterWick2016} and~\cite{FreiRichterSammelband} and by Gangl to simulate 
problems of topology optimisation~\cite{GanglDiss}.

The goal of this article is to explain in detail the implementation of the 
fitted finite element method and to provide a programming code 
based on the C++ finite element library deal.II~\cite{BangerthHartmannKanschat2007,dealII85}.
In {extension} to \cite{FreiRichter2014} further details concerning the implementation of 
the finite element approach, and in particular on the construction of the hierarchical basis are given.
Moreover, we study the performance of some iterative solvers, i.e.$\,$ a simple and preconditioned
conjugate gradient method (CG/PCG) to solve the arising linear systems, while
{`only'} a direct solver was used in \cite{FreiRichter2014}.

The organisation of this article is as follows. 
In Section \ref{sec_model_problem}, {a simple elliptic model problem} 
is presented. Next, in Section \ref{sec:fe} {we introduce the local modifications
of the finite element space in the cells that are cut by the interface.}
In Section \ref{sec_discrete_forms} the discrete forms and 
the approximation properties are briefly recapitulated.
Then, we introduce the hierarchical 
finite element space in Section~\ref{sec_hierarchical}.
Section \ref{sec:num} consists of two numerical tests, that illustrate the main 
features and the performance of our approach.
Finally, we present algorithmic details and details on the 
implementation in Section~\ref{sec.impl}.
We conclude in Section~\ref{sec.conclusion}. 

\section{Motivation: A simple elliptic model problem}
\label{sec_model_problem}
To get started, let us consider a simple
Poisson problem in $\Omega\subset\mathbb{R}^2$ 
with a discontinuous coefficient $\kappa$ across an interface line
$\Gamma\subset\mathbb{R}$. Find $u:\Omega\to\mathbb{R}$ such that

\begin{equation}\label{problem:1}
  -\nabla\cdot (\kappa_i\nabla u) = f\text{ on
  }\Omega_i\;\; (i=1,2),\quad
  [u]=0 \text{ and } [\kappa\partial_n u] = 0\text{ on }\Gamma, 
\end{equation}
with constants $\kappa_i>0$ and subject to homogeneous Dirichlet conditions on the 
exterior boundary $\partial\Omega$. Here, we denote 
the subdomains by $\Omega_i, i=1,2$ and by $[u]$
the jump of $u$ across the interface $\Gamma$. The variational
formulation of this interface problem is given by
\begin{definition}[Continuous variational formulation]
Find $u\in H^1_0(\Omega)$ such that 
\begin{align}
a(u,\phi) :=\sum_{i=1}^2 (\kappa_i\nabla
u,\nabla\phi) = (f,\phi)\quad\forall\phi\in H^1_0(\Omega).\label{contBilin}
\end{align}
\end{definition}

Interface problems are elaborately discussed in literature. If the
interface $\Gamma$ cannot be resolved by the mesh, the overall 
error for a standard finite element approach
will be bounded by
\[
\|\nabla (u-u_h)\|_\Omega = \mathcal{O}(h^{1/2}),
\]
independent of the polynomial degree $r$ of the finite element
space, see the early works \cite{Babuska1970} or~\cite{MacKinnonCarey1987}. In Figure~\ref{fig:standardfe},
we show the $H^1$- and $L^2$-norm errors for a simple interface problem with a
curved interface that is not resolved by the finite element mesh. 
Both linear and quadratic finite elements yield only $\mathcal{O}(h^{1/2})$
accuracy in the $H^1$-semi-norm and $\mathcal{O}(h)$ in the $L^2$-norm. This is
due to the limited regularity of the solution across the interface.  

\begin{figure}[t]
  \begin{minipage}{0.63\textwidth}
  \hspace{-0.8cm}
    \includegraphics[width=\textwidth]{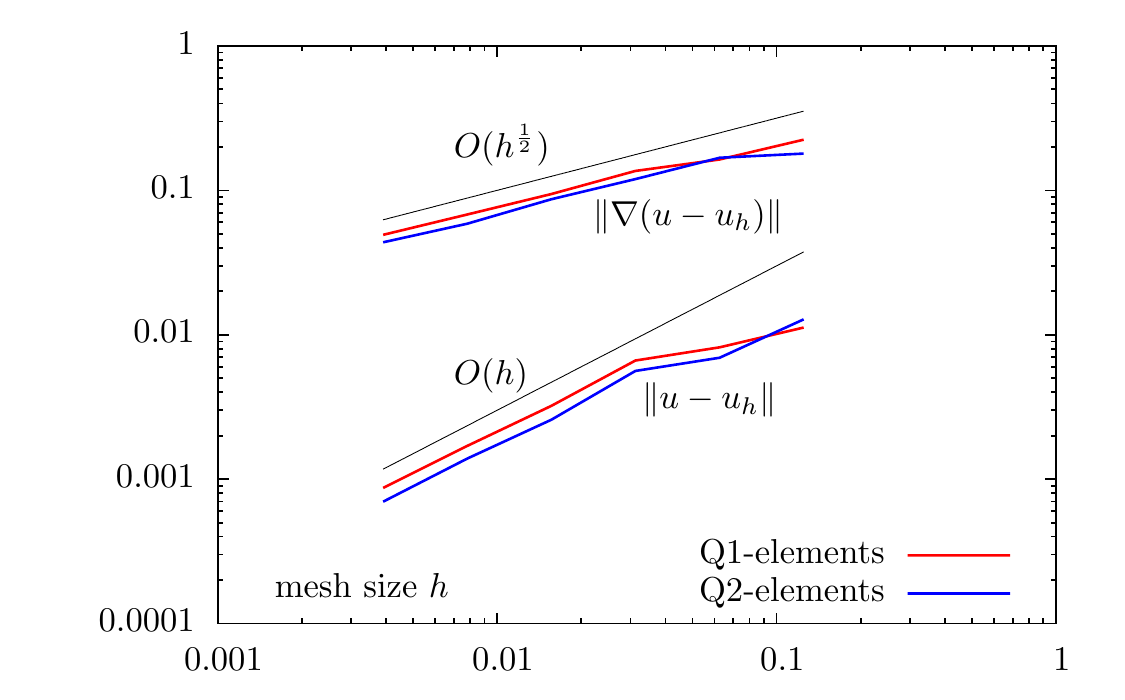}
  \end{minipage}
  \hfil
  \begin{minipage}{0.33\textwidth}
  
  \begin{picture}(0,0)%
\includegraphics{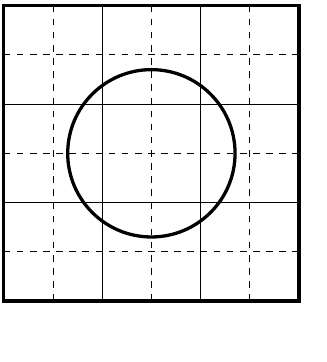}%
\end{picture}%
\setlength{\unitlength}{2072sp}%
\begingroup\makeatletter\ifx\SetFigFont\undefined%
\gdef\SetFigFont#1#2{%
  \fontsize{#1}{#2pt}%
  \selectfont}%
\fi\endgroup%
\begin{picture}(2973,3283)(2218,-3761)
\put(2746,-3070){\makebox(0,0)[lb]{\smash{{\SetFigFont{8}{9.6}{\color[rgb]{0,0,0}$\Omega_2$}%
}}}}
\put(5176,-1411){\makebox(0,0)[lb]{\smash{{\SetFigFont{9}{10.8}{\color[rgb]{0,0,0}$-\kappa_i\Delta u=f$}%
}}}}
\put(5176,-2086){\makebox(0,0)[lb]{\smash{{\SetFigFont{9}{10.8}{\color[rgb]{0,0,0}$u=u^d$ on $\partial\Omega$}%
}}}}
\put(2251,-3661){\makebox(0,0)[lb]{\smash{{\SetFigFont{9}{10.8}{\color[rgb]{0,0,0}$\kappa_1=0.1,\; \kappa_2=1$}%
}}}}
\put(3196,-1750){\makebox(0,0)[lb]{\smash{{\SetFigFont{8}{9.6}{\color[rgb]{0,0,0}$\Omega_1$}%
}}}}
\end{picture}%
    
  \end{minipage}
  \caption{$L^2$- and $H^1$-error for a standard finite element
     method using $Q_1$ and $Q_2$ polynomials for the discretisation 
     of the interface problem~\eqref{problem:1}. Configuration of the test problem in the
     right sketch. Further details are given in
     Section~\ref{sec:num}.\label{fig:standardfe}}  
 \end{figure}

\section{Locally modified finite elements}
\label{sec:fe}

\begin{figure}[t]
  \centering
  \resizebox*{0.8\textwidth}{!}{
  
  \begin{picture}(0,0)%
\includegraphics{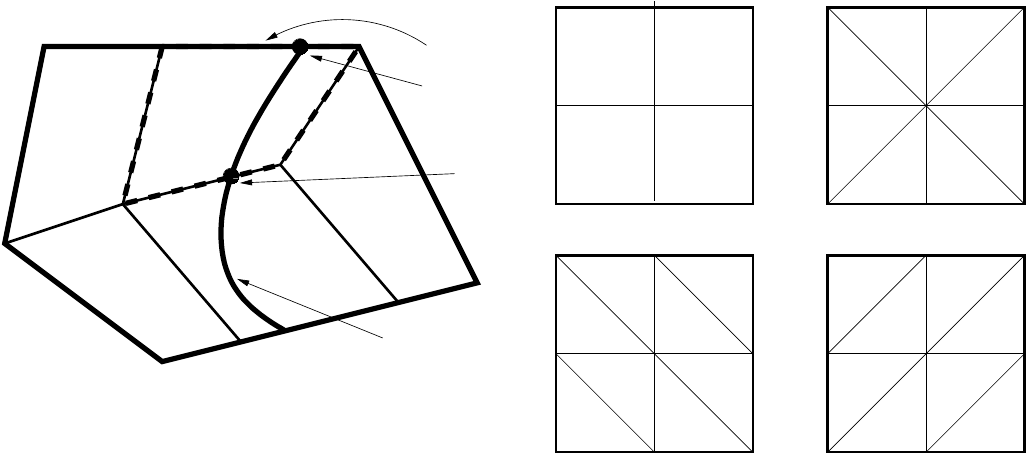}%
\end{picture}%
\setlength{\unitlength}{1657sp}%
\begingroup\makeatletter\ifx\SetFigFont\undefined%
\gdef\SetFigFont#1#2{%
  \fontsize{#1}{#2pt}%
  \selectfont}%
\fi\endgroup%
\begin{picture}(11742,5197)(1747,-5179)
\put(8191,-2634){\makebox(0,0)[lb]{\smash{{\SetFigFont{5}{6.0}{\color[rgb]{0,0,0}$\hat x_1$}%
}}}}
\put(9271,-2634){\makebox(0,0)[lb]{\smash{{\SetFigFont{5}{6.0}{\color[rgb]{0,0,0}$\hat x_2$}%
}}}}
\put(10441,-2634){\makebox(0,0)[lb]{\smash{{\SetFigFont{5}{6.0}{\color[rgb]{0,0,0}$\hat x_3$}%
}}}}
\put(8191,-1451){\makebox(0,0)[lb]{\smash{{\SetFigFont{5}{6.0}{\color[rgb]{0,0,0}$\hat x_4$}%
}}}}
\put(9271,-1451){\makebox(0,0)[lb]{\smash{{\SetFigFont{5}{6.0}{\color[rgb]{0,0,0}$\hat x_5$}%
}}}}
\put(10441,-1451){\makebox(0,0)[lb]{\smash{{\SetFigFont{5}{6.0}{\color[rgb]{0,0,0}$\hat x_6$}%
}}}}
\put(8191,-358){\makebox(0,0)[lb]{\smash{{\SetFigFont{5}{6.0}{\color[rgb]{0,0,0}$\hat x_7$}%
}}}}
\put(9271,-358){\makebox(0,0)[lb]{\smash{{\SetFigFont{5}{6.0}{\color[rgb]{0,0,0}$\hat x_8$}%
}}}}
\put(10441,-358){\makebox(0,0)[lb]{\smash{{\SetFigFont{5}{6.0}{\color[rgb]{0,0,0}$\hat x_9$}%
}}}}
\put(3646,-961){\makebox(0,0)[lb]{\smash{{\SetFigFont{7}{8.4}{\color[rgb]{0,0,0}$\Omega_1$}%
}}}}
\put(5131,-3211){\makebox(0,0)[lb]{\smash{{\SetFigFont{7}{8.4}{\color[rgb]{0,0,0}$\Omega_2$}%
}}}}
\put(1936,-3886){\makebox(0,0)[lb]{\smash{{\SetFigFont{7}{8.4}{\color[rgb]{0,0,0}$\Omega$}%
}}}}
\put(6751,-601){\makebox(0,0)[lb]{\smash{{\SetFigFont{8}{9.6}{\color[rgb]{0,0,0}$P$}%
}}}}
\put(6256,-4021){\makebox(0,0)[lb]{\smash{{\SetFigFont{7}{8.4}{\color[rgb]{0,0,0}$\Gamma$}%
}}}}
\put(7071,-2025){\makebox(0,0)[lb]{\smash{{\SetFigFont{6}{7.2}{\color[rgb]{0,0,0}$x_2$}%
}}}}
\put(6751,-1051){\makebox(0,0)[lb]{\smash{{\SetFigFont{6}{7.2}{\color[rgb]{0,0,0}$x_1$}%
}}}}
\end{picture}%

  }

  \caption{\textit{Left:} Triangulation $\mathcal{T}_{2h}$ of a domain $\Omega$ that
    is split into $\Omega_1$ and $\Omega_2$ with interface
    $\Gamma$. The quadrilateral cells in $\mathcal{T}_{2h}$ are illustrated by the bold lines. 
    Patch $P$ is cut by $\Gamma$ at $x_1$ and $x_2$. \textit{Right:}
    Subdivision of reference patches $\hat P_0,\hat P_1,\hat P_2,\hat
    P_3$ (top left to bottom right) into eight triangles each.
    } 
  \label{fig:mesh}
\end{figure}

In order to define the modified finite elements, let us assume that 
$\mathcal{T}_{2h}$ is a form and shape-regular triangulation of the domain
$\Omega\subset\mathbb{R}^2$ into open quadrilaterals. The discrete domain $\Omega_h$
does not necessarily resolve the partitioning
$\Omega=\Omega_1\cup\Gamma\cup\Omega_2$ and the interface $\Gamma$ can
cut the elements $P\in\mathcal{T}_{2h}$. 

We assume that the interface $\Gamma$ cuts patches in the following way: 
\begin{enumerate}
\item Each (open) patch $P\in\mathcal{T}_{2h}$ is either not cut
  $P\cap\Gamma=\emptyset$ or cut in exactly two points on its 
  boundary: $P\cap\Gamma\neq \emptyset$ and $\partial
  P\cap\Gamma=\{x^P_1,x^P_2\}$.
\item If a patch is cut, the two cut-points $x^P_1$ and $x^P_2$ may
  not be inner points of the same edge.
\end{enumerate}
In principle, these assumptions only rule out two possibilities: a
patch may not be cut multiple times and the interface may not enter
and leave the patch at the same edge. Both situations can be avoided
by refinement of the underlying mesh. If the interface is matched by
an edge, the patch is not considered to be cut.

\subsection{Construction of the finite element space}

We define four reference patches $\hat{P}_0,...,\hat{P}_3$ on the unit square
$(0,1)^2$. These patches are split into 4 quadrilaterals or 8 triangles
as illustrated in Figure~\ref{fig:mesh}.
Moreover, we define 9 nodes $\hat x_1,\dots,\hat x_9$ in the vertices, edge midpoints and the midpoint
of the patches, which will serve as degrees of freedom of the finite element space. Note that the
same position of the degrees of freedom can be found in a standard quadratic $Q_2$ discretisation,
the structure of which served as a starting point for our implementation. 

Now we define local reference spaces $\hat Q_P$ (here $P$ indicates the
  patch, but not the polynomial degree) as a piecewise polynomial space of degree 1.
On the reference patch $\hat P_0$ consisting of quadrilaterals $\hat K_1,\dots, \hat K_4$,
we choose the standard space
of piecewise bilinear functions 
\[
\hat Q_P = \hat Q := \left\{ \phi\in C(\bar P),\; \phi\Big|_{\hat K_i}\in
\operatorname{span}\{1,x,y,xy\},\; \hat K_1,\dots,\hat K_4\in \hat P\right\}.  
\]
This local space will be used when a physical patch $P$ is not cut by the interface.
If a patch $P\in\mathcal{T}_{2h}$ is cut by the interface, we use one of the 
reference patches $\hat P_1, \dots, \hat P_3$ with triangles $\hat T_1,\dots,\hat T_8$ and define 
\[
\hat Q_P = \hat Q_\text{mod} := \left\{ \phi\in C(\bar P),\; \phi\Big|_{\hat T_i}\in
\operatorname{span}\{1,x,y\},\; \hat T_1,\dots,\hat T_8\in \hat P\right\}. 
\]

We define a mapping $\hat T_P \in \hat Q_P, \hat T_P: \hat{P}_i \to P$, that is piecewise linear in sub-triangles
and piecewise bi-linear in sub-quadrilaterals on $\hat{P}_i$. This gives us the possibility to map the degrees of freedom
$\hat x_1,\dots, \hat x_9$ to nodes $x_1^P,\dots,x_9^P$, in such a way that the interface is resolved in a 
linear approximation in the physical patch.
Denoting by 
$\{\hat\phi^1,\dots,\hat\phi^9\}$ the standard Lagrange basis of
$\hat Q$ or $\hat Q_\text{mod}$ with
$\hat\phi^i(\hat{x}_j)=\delta_{ij}$, the transformation $\hat{T}_P$ is
given by
\begin{equation}
\hat{T}_P(\hat{x})=\sum_{i=1}^9 x_i^P\hat\phi_i(\hat{x}). \label{TP}
\end{equation}

Finally, we define the finite element trial space $V_h\subset H^1_0(\Omega)$ as an
iso-parametric space on the triangulation $\mathcal{T}_{2h}$:
\[
V_h = \left\{\phi\in C(\bar\Omega) \cap H^1_0(\Omega),\; \phi\circ \hat{T}_P^{-1}\Big|_P\in
  \hat Q_P\text{ for all patches }P\in\mathcal{T}_{2h}\right\}.
\]

Note that, whatever splitting of the patch is applied, the local number of degrees
of freedom is always 9. Therefore, the global number of unknowns and the {sparsity pattern} of the system matrix 
stays identical, independent of the interface position.

It is important to note, that the functions in $\hat Q$ and $\hat
Q_\text{mod}$ are all piecewise linear on the edges $\partial P$, such
that  mixing different element types does not affect the continuity of
the global finite element space. 

%
\begin{figure}[t]
  \centering
\scalebox{1.1}{
\begin{picture}(0,0)%
\includegraphics{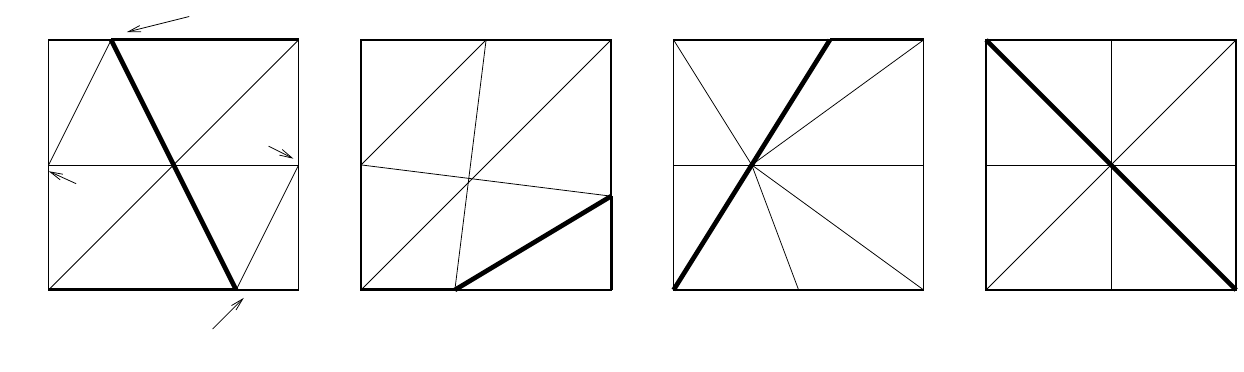}%
\end{picture}%
\setlength{\unitlength}{1973sp}%
\begingroup\makeatletter\ifx\SetFigFont\undefined%
\gdef\SetFigFont#1#2{%
  \fontsize{#1}{#2pt}%
  \selectfont}%
\fi\endgroup%
\begin{picture}(11909,3544)(736,-3545)
\put(2626,-136){\makebox(0,0)[lb]{\smash{{\SetFigFont{7}{8.4}{\color[rgb]{0,0,0}$e_3$}%
}}}}
\put(4351,-3061){\makebox(0,0)[lb]{\smash{{\SetFigFont{7}{8.4}{\color[rgb]{0,0,0}$r$}%
}}}}
\put(1651,-1411){\makebox(0,0)[lb]{\smash{{\SetFigFont{7}{8.4}{\color[rgb]{0,0,0}$x_m$}%
}}}}
\put(751,-3061){\makebox(0,0)[lb]{\smash{{\SetFigFont{7}{8.4}{\color[rgb]{0,0,0}$x_1$}%
}}}}
\put(3676,-211){\makebox(0,0)[lb]{\smash{{\SetFigFont{7}{8.4}{\color[rgb]{0,0,0}$x_3$}%
}}}}
\put(826,-211){\makebox(0,0)[lb]{\smash{{\SetFigFont{7}{8.4}{\color[rgb]{0,0,0}$x_4$}%
}}}}
\put(2626,-3361){\makebox(0,0)[lb]{\smash{{\SetFigFont{7}{8.4}{\color[rgb]{0,0,0}$e_1$}%
}}}}
\put(3601,-3061){\makebox(0,0)[lb]{\smash{{\SetFigFont{7}{8.4}{\color[rgb]{0,0,0}$x_2$}%
}}}}
\put(1501,-1936){\makebox(0,0)[lb]{\smash{{\SetFigFont{7}{8.4}{\color[rgb]{0,0,0}$e_4$}%
}}}}
\put(1876,-2986){\makebox(0,0)[lb]{\smash{{\SetFigFont{7}{8.4}{\color[rgb]{0,0,0}$s$}%
}}}}
\put(2851,-1411){\makebox(0,0)[lb]{\smash{{\SetFigFont{7}{8.4}{\color[rgb]{0,0,0}$e_2$}%
}}}}
\put(6676,-2536){\makebox(0,0)[lb]{\smash{{\SetFigFont{7}{8.4}{\color[rgb]{0,0,0}$s$}%
}}}}
\put(2626,-586){\makebox(0,0)[lb]{\smash{{\SetFigFont{7}{8.4}{\color[rgb]{0,0,0}$r$}%
}}}}
\put(9001,-286){\makebox(0,0)[lb]{\smash{{\SetFigFont{7}{8.4}{\color[rgb]{0,0,0}$s$}%
}}}}
\put(2101,-3481){\makebox(0,0)[lb]{\smash{{\SetFigFont{6}{7.2}{\color[rgb]{0,0,0}$\textbf{A}$}%
}}}}
\put(5101,-3481){\makebox(0,0)[lb]{\smash{{\SetFigFont{6}{7.2}{\color[rgb]{0,0,0}$\textbf{B}$}%
}}}}
\put(8101,-3481){\makebox(0,0)[lb]{\smash{{\SetFigFont{6}{7.2}{\color[rgb]{0,0,0}$\textbf{C}$}%
}}}}
\put(11101,-3481){\makebox(0,0)[lb]{\smash{{\SetFigFont{6}{7.2}{\color[rgb]{0,0,0}$\textbf{D}$}%
}}}}
\end{picture}%

}
  \caption{Different types of cut patches. The subdivision can be
    anisotropic with $r,s\in (0,1)$ arbitrary.}\label{fig:types}
\end{figure}

Next, we present the subdivision of interface patches $P$ into eight
triangles.  

\begin{definition}
We distinguish four different types of interface
cuts, see Figure~\ref{fig:types}: 
\begin{description}
\item[Configuration A] The patch is cut in the interior of two
  opposite edges. 
\item[Configuration B] The patch is cut in the interior of two
  adjacent edges.
\item[Configuration C] The patch is cut in the interior of one 
  edge and in one node. 
\item[Configuration D] The patch is cut in two opposite nodes. 
\end{description}
\end{definition}

Configurations A and B are based on the reference patches $\hat P_2$
and $\hat P_3$, configurations C and D use the reference patch $\hat
P_1$, see Figure~\ref{fig:mesh}.  

By $e_i\in\mathbb{R}^2$, $i=1,2,3,4$ we
denote the vertices in the interior of edges, by $m_P\in\mathbb{R}^2$ the grid point
in the interior of the patch. The parameters $r,s\in (0,1)$ describe the relative
position of the intersection points with the interface on the outer
edges. 

If an edge is intersected by the interface, we move the corresponding point 
$e_i$ on this edge to the point of intersection. The position of $m_P$ 
depends on the specific
configuration. For configuration A, B and D, we choose $m_P$ as the
intersection of the line connecting $e_2$ and $e_4$ with the line connecting
$e_1$ and $e_3$. In configuration C, we use the intersection of the line
connecting $e_2$ and $e_4$ with
the line connecting $x_1$ and $e_3$. 

As the cut of the elements can be arbitrary with $r,s\to 0$ or $r,s\to
1$, the triangle's aspect ratio can be very large. With the described choices for the midpoints $m_P$
we can guarantee, that the 
maximum angles in all triangles will be well bounded away from
$180^\circ$~\cite{FreiRichter2014}:
\begin{lemma}[Maximum angle condition]\label{lemma:maxangle}
  All interior angles of the triangles shown in
  Figure~\ref{fig:types} are bounded by $144^\circ$ independent of
  $r,s\in (0,1)$.
\end{lemma}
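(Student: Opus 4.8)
The plan is an explicit geometric case analysis over the four cut types, exactly as in~\cite{FreiRichter2014}. In each type the triangulation of a patch is fixed once $r$ and $s$ are fixed: the eight triangles are the ``fan'' joining the interior node $m_P$ to the eight boundary nodes $x_1,e_1,x_2,e_2,x_3,e_3,x_4,e_4$ read off in cyclic order along $\partial P$, where $e_i$ is taken at the intersection point when its edge is cut and at the edge midpoint otherwise, and $m_P$ is the intersection of the two lines prescribed above. First I would use the dihedral symmetry of the reference square to reduce the number of genuinely different labellings, and dispose of Configuration~D at once: there $m_P$ is the midpoint of the diagonal, the fan is the standard symmetric subdivision, and every angle is $45^\circ$ or $90^\circ$. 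I would then separate the three kinds of angle occurring in a fan. At a corner $x_i$ the segment $x_i m_P$ merely subdivides a right angle of the square (as $m_P$ is interior), so both angles there are $<90^\circ$. At $m_P$ the eight angles are the angles subtended by the eight boundary segments and sum to $360^\circ$. And at an edge node $e_i$ the two adjacent fan triangles contribute angles $\alpha$ and $180^\circ-\alpha$, whose maximum is $90^\circ+|\alpha-90^\circ|$; hence at $e_i$ the bound $\le 144^\circ$ is \emph{equivalent} to the purely directional condition that the ray $e_i\to m_P$ meets the straight edge through $e_i$ at an angle in $[36^\circ,144^\circ]$, i.e.\ is never within $36^\circ$ of being parallel to that edge.

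For Configurations A, B and C I would place coordinates on $(0,1)^2$ and write the nine node positions as explicit functions of $(r,s)$. A convenient simplification is that in A and C the two defining lines meet at the \emph{midpoint of the interface chord}, so the discrete interface is a straight segment there; moreover in A, C and D the point $m_P$ lies on the axis of the square joining the two uncut edge midpoints that sit opposite one another, which forces the angles at those two midpoints to be exactly $90^\circ$. What remains is: (i) the angles at the \emph{cut} edge nodes, governed by the directional condition above, which in Configuration A reduces to a bound of the shape $\arctan(1/|s-r|)$ and so stays in $(45^\circ,90^\circ]$, and an analogous but messier rational expression in B and C; and (ii) the angles at $m_P$, for which it suffices to check that $m_P$ stays uniformly away from the interiors of the eight boundary segments — it never approaches the midpoint of such a segment, because its coordinate on the ``cross'' axis is either pinned or bounded below — so each subtended angle is $<144^\circ$. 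All of this is a finite list of rational inequalities in $r,s$ on $(0,1)^2$, which I would settle by examining the finitely many interior critical points together with the behaviour on the boundary of the parameter square.

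The only way an angle can come near $144^\circ$ is a degenerate limit $r$ or $s\to 0,1$, in which a cut point merges with a corner and one or two fan triangles collapse to a segment; this is the heart of the matter. Since $m_P$ always stays strictly interior, such a collapse is of ``sliver'' type — a vanishing edge, not a straightening vertex — so the collapsing triangle has one angle tending to $0$ and the other two tending to $\beta$ and $180^\circ-\beta$, where $\beta$ is the angle between the limiting direction of the vanishing edge and the limiting direction from the merge point to the opposite vertex. The claim is precisely that $\beta\in[36^\circ,144^\circ]$, and this is where the choice of $m_P$ as a line intersection earns its keep: it pins the limiting position of $m_P$ to a definite interior point, bounded away from the vertices and mesh lines that would push $\beta$ out of that window. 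Configuration B is the delicate case, because there the discrete interface genuinely kinks at $m_P$ and, when both cut points run into the \emph{same} corner, a whole neighbourhood of that corner — several triangles simultaneously — degenerates. I would handle this by blowing up the picture around the offending corner, rescaling so that the corner, the two cut points and $m_P$ become order-one points (their relative positions dictated by the intersection formula for $m_P$), and then verifying the $[36^\circ,144^\circ]$ window for the resulting one-parameter family of model triangles. I expect that rescaling analysis in Configuration B to be the main technical hurdle; everything else is bookkeeping over finitely many elementary two-variable inequalities.
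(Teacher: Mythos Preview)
The present paper does not prove Lemma~\ref{lemma:maxangle} but merely records it with a reference to~\cite{FreiRichter2014}, where the detailed case analysis is carried out. Your plan --- an explicit configuration-by-configuration angle check, disposing of Configuration~D trivially, handling the corner and edge-node angles by the directional argument you describe, and treating the degenerate parameter limits $r,s\to 0,1$ (with Configuration~B as the delicate case) via a local blow-up --- is precisely the strategy used there.
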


The respective reference patches $\hat P_0,...,\hat P_3$ (see Figure~\ref{fig:mesh})
are chosen based on the following criteria: First, it is mandatory that a maximum angle 
can be guaranteed. Second, it is beneficial for practical purposes
to keep the maximum angle as small as possible on the one hand and on the other hand 
to conserve the symmetry in the discretisation, in the case of a symmetric problem. From these considerations, 
we choose type $\hat{P}_2$ if $r+s > 1$ and $\hat{P}_3$ if $r+s < 1$ for configuration A in our implementation. 
For an example, consider
the left patch in Figure~\ref{fig:types}, where $\hat{P}_3$ has been chosen, as $r+s>1$. {Note that the symmetry criterion 
would not be fulfilled, if we would choose always either $\hat{P}_2$ or $\hat{P}_3$, independent of $r$ and $s$.}
In configuration B, we 
choose $\hat{P}_3$, when the cut separates the lower left or the upper right vertex from the rest of the patch
and $\hat{P}_2$, when only the lower right or the upper left vertex lie on one side of the interface.

\section{Discrete variational formulation and approximation properties}
\label{sec_discrete_forms}
In the previous sections, we tacitly assumed that the 
interface can be resolved in a geometric exact way.
In the case of a curved interface, a linear 
approximation by mesh lines is constructed.

With the help of the discrete approximation of the interface, we 
introduce a second splitting of the domain $\Omega$ into
the discrete subdomains
\begin{align*}
 \Omega =  \Omega_h^1 \cup \Omega_h^2,
\end{align*}
such that all cells of the sub-triangulation are either completely included in $\Omega_h^1$ or 
in $\Omega_h^2$, see Figure~\ref{fig:discmesh}.

\begin{figure}[bt]
  \centering
\scalebox{1.2}{
\begin{picture}(0,0)%
\includegraphics{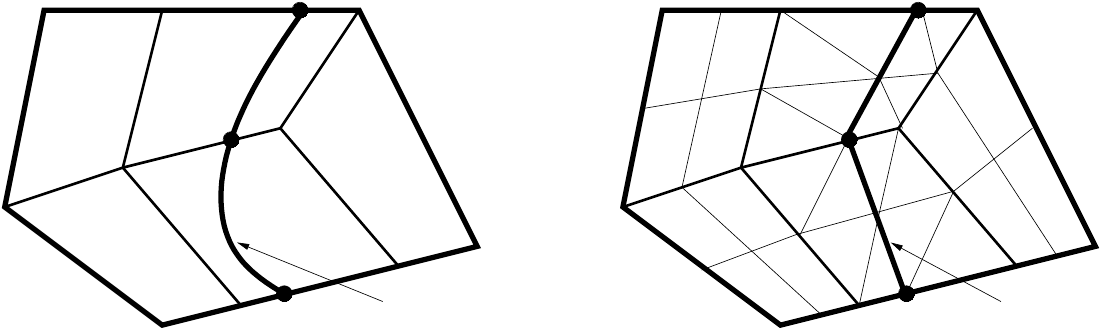}%
\end{picture}%
\setlength{\unitlength}{1657sp}%
\begingroup\makeatletter\ifx\SetFigFont\undefined%
\gdef\SetFigFont#1#2{%
  \fontsize{#1}{#2pt}%
  \selectfont}%
\fi\endgroup%
\begin{picture}(12573,3753)(1747,-4165)
\put(13321,-4021){\makebox(0,0)[lb]{\smash{{\SetFigFont{7}{8.4}{\color[rgb]{0,0,0}$\Gamma_h$}%
}}}}
\put(12196,-3211){\makebox(0,0)[lb]{\smash{{\SetFigFont{7}{8.4}{\color[rgb]{0,0,0}$\Omega_h^2$}%
}}}}
\put(10711,-961){\makebox(0,0)[lb]{\smash{{\SetFigFont{7}{8.4}{\color[rgb]{0,0,0}$\Omega_h^1$}%
}}}}
\put(3646,-961){\makebox(0,0)[lb]{\smash{{\SetFigFont{7}{8.4}{\color[rgb]{0,0,0}$\Omega_1$}%
}}}}
\put(5131,-3211){\makebox(0,0)[lb]{\smash{{\SetFigFont{7}{8.4}{\color[rgb]{0,0,0}$\Omega_2$}%
}}}}
\put(6256,-4021){\makebox(0,0)[lb]{\smash{{\SetFigFont{7}{8.4}{\color[rgb]{0,0,0}$\Gamma$}%
}}}}
\end{picture}%

}
  \caption{\label{fig:discmesh}
  {\textit{Left:} Patch elements and interface $\Gamma$ that goes through two patches.
  \textit{Right:} Sub-triangulation} and splitting of the mesh into subdomains $\Omega_h^1$ and $\Omega_h^2$.
  The interface $\Gamma_h$ is a linear approximation of the interface $\Gamma$ shown on the left-hand side.}
\end{figure}

Using these definitions, we define a discrete bilinear form
$a_h(\cdot,\cdot)$. 
For the elliptic model problem, this form is given by
\begin{align}
 a_h(u_h,\phi_h) := (\kappa_h \nabla u_h,\nabla \phi_h)_{\Omega_h},\label{discBilin}
\end{align}
where 
\begin{align*}
\kappa_h = \begin{cases}
            \kappa_1 \quad \text{in } \Omega_h^1,\\
            \kappa_2 \quad \text{in } \Omega_h^2.\\
           \end{cases}
\end{align*}
Note that $\kappa_h$ differs from $\kappa$ in a small layer between
the continuous interface $\Gamma$ and the discrete interface $\Gamma_h$.

\begin{definition}[Discrete variational formulation]
The discrete problem is to find $u_h \in V_h$ such that 
\begin{align*}
 a_h(u_h,\phi_h) = (f,\phi_h)_{\Omega_h} \quad \forall \phi_h \in V_h. 
\end{align*}
\end{definition}

The maximum angle conditions of Lemma~\ref{lemma:maxangle}
is sufficient to ensure that the Lagrangian interpolation operators $I_h:H^2(T)\cap
C(\bar T)\to V_h$
are of optimal order for smooth functions $v\in H^2(T)\cap C(\bar T)$ on an element $T$, i.e.$\,$
\begin{equation}\label{interpolation}
  \|\nabla^k (v-I_h v)\|_T \le c h_{T,\max}^{2-k} \|\nabla^2
  v\|_T,\quad  k=0,1
\end{equation}
where $c>0$ is a constant and $h_{T,\max}$ is the maximum diameter
of a triangle $T\in P$ (see e.g.~\cite{Apel1999}). If the interface $\Gamma$ is curved,
the solution $u$ to~\eqref{problem:1} is however non-smooth across the interface. 
Here, we have to argument using smooth extensions 
of $u|_{\Omega_i}, i=1,2$ to the other sub-domain and the smallness 
of the region 
\begin{align*}
 S_h=(\Omega_1\cap \Omega_h^2) \cup (\Omega_2\cap \Omega_h^1)
\end{align*}
around the interface.


The following result has been shown for the elliptic interface
problem~\eqref{problem:1}:


\begin{theorem}[A priori estimate]\label{thm:apriori}
  Let $\Omega\subset\mathbb{R}^2$ be a domain with convex polygonal
  boundary, split into $\Omega=\Omega_1\cup\Gamma\cup\Omega_2$, where
  $\Gamma$ is a smooth interface with $C^2$-parametrisation. We
  assume that $\Gamma$ divides $\Omega$ in such a way that the solution
  $u\in H^1_0(\Omega)$ satisfies the stability estimate
  \[
  u\in H^1_0(\Omega)\cap H^2(\Omega_1\cup\Omega_2),\quad
  \|u\|_{H^2(\Omega_1\cup\Omega_2)}\le c_s \|f\|. 
  \]
  For  the corresponding modified finite element solution $u_h\in V_h$, 
  it holds that
  \[
  \|\nabla (u-u_h)\|_\Omega\le C h_P \|f\|,\quad
  \|u-u_h\|_\Omega\le C h_P^2 \|f\|.
  \]
\end{theorem}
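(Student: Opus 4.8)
The plan is to treat Theorem~\ref{thm:apriori} as a standard finite element error analysis in the presence of a variational crime. Because $\Omega$ has convex polygonal boundary, the patch mesh resolves $\partial\Omega$ exactly, so $\Omega_h=\Omega$, and the only mismatch between the continuous problem \eqref{contBilin} and the discrete one is that $a_h$ from \eqref{discBilin} replaces $\kappa$ by $\kappa_h$, the two coefficients differing only on the thin strip $S_h=(\Omega_1\cap\Omega_h^2)\cup(\Omega_2\cap\Omega_h^1)$ between $\Gamma$ and $\Gamma_h$. I would first record two preliminary facts. (i) The form $a_h$ is uniformly coercive and continuous on $V_h$ in the energy seminorm $\|\nabla\cdot\|$, with constants $\min\{\kappa_1,\kappa_2\}$ and $\max\{\kappa_1,\kappa_2\}$, independently of the interface position. (ii) An optimal-order interpolation estimate holds for $V_h$ applied to the (only piecewise-$H^2$) solution $u$: on patches not cut by $\Gamma$ the bound \eqref{interpolation} applies directly, while on cut patches one interpolates suitable $H^2$-extensions $\tilde u_i$ of $u|_{\Omega_i}$ on the sub-triangles lying in $\Omega_h^i$, using that Lemma~\ref{lemma:maxangle} makes \eqref{interpolation} valid on the strongly anisotropic sub-triangles; keeping track of the $\mathcal{O}(h_P^2)$ mismatch of $\tilde u_1$ and $\tilde u_2$ along $\Gamma_h$ one obtains $\|\nabla(u-I_hu)\|_\Omega\le C h_P\|u\|_{H^2(\Omega_1\cup\Omega_2)}$ and $\|u-I_hu\|_\Omega\le C h_P^2\|u\|_{H^2(\Omega_1\cup\Omega_2)}$.

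For the $H^1$-estimate I would set $e_h:=I_hu-u_h\in V_h$ and use coercivity together with the discrete equation $a_h(u_h,e_h)=(f,e_h)_\Omega$:
\[
  \min\{\kappa_1,\kappa_2\}\,\|\nabla e_h\|^2 \le a_h(e_h,e_h)
  = a_h(I_hu-u,e_h) + \bigl(a_h(u,e_h)-(f,e_h)_\Omega\bigr).
\]
The first term is controlled by continuity of $a_h$ and the interpolation estimate, by $C h_P\|u\|_{H^2(\Omega_1\cup\Omega_2)}\|\nabla e_h\|$. For the second, since $e_h\in H^1_0(\Omega)$ the continuous problem \eqref{contBilin} gives $(f,e_h)_\Omega=a(u,e_h)$, whence
\[
  a_h(u,e_h)-(f,e_h)_\Omega = \bigl((\kappa_h-\kappa)\nabla u,\nabla e_h\bigr)_{S_h}
  \le \|(\kappa_h-\kappa)\nabla u\|_{S_h}\,\|\nabla e_h\|.
\]
The decisive ingredient here is the strip estimate $\|\nabla u\|_{L^2(S_h)}\le C h_P\|u\|_{H^2(\Omega_1\cup\Omega_2)}$: since $\Gamma$ has a $C^2$-parametrisation and $\Gamma_h$ is its piecewise-linear interpolant, $S_h$ is contained in a tubular neighbourhood of $\Gamma$ of width $\le c h_P^2$, and applying the one-sided inequality $\|w\|_{L^2(S_h)}\le c h_P\|w\|_{H^1(\Omega_i)}$ to $w=\partial_j\tilde u_i$ yields the claim. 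Dividing by $\|\nabla e_h\|$ and invoking $\|u\|_{H^2(\Omega_1\cup\Omega_2)}\le c_s\|f\|$ gives $\|\nabla e_h\|\le C h_P\|f\|$, and the triangle inequality combined with the interpolation estimate gives the asserted $H^1$-bound.

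For the $L^2$-estimate I would run an Aubin--Nitsche duality argument: let $z\in H^1_0(\Omega)$ solve $a(z,\phi)=(u-u_h,\phi)_\Omega$ for all $\phi\in H^1_0(\Omega)$, so that the stability assumption of the theorem, applied with right-hand side $u-u_h$, gives $z\in H^2(\Omega_1\cup\Omega_2)$ with $\|z\|_{H^2(\Omega_1\cup\Omega_2)}\le c_s\|u-u_h\|$. Since $u-u_h\in H^1_0(\Omega)$,
\[
  \|u-u_h\|^2 = a(z,u-u_h) = a(z-I_hz,u-u_h) + a(I_hz,u-u_h).
\]
The first term is bounded, by continuity of $a$, the interpolation estimate for $z$ and the $H^1$-bound just established, by $C h_P\|z\|_{H^2(\Omega_1\cup\Omega_2)}\cdot C h_P\|f\|\le C h_P^2 c_s\|f\|\,\|u-u_h\|$. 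For the second, symmetry of $a$, the relation $a(u,\cdot)=(f,\cdot)$ on $H^1_0(\Omega)$, the inclusion $I_hz\in H^1_0(\Omega)$, and the discrete equation give $a(I_hz,u)=(f,I_hz)_\Omega=a_h(u_h,I_hz)$, hence
\[
  a(I_hz,u-u_h) = a_h(u_h,I_hz)-a(u_h,I_hz)
  = \bigl((\kappa_h-\kappa)\nabla u_h,\nabla I_hz\bigr)_{S_h}
  \le \|\nabla u_h\|_{S_h}\,\|\nabla I_hz\|_{S_h}.
\]
Using $\|\nabla u_h\|_{S_h}\le\|\nabla(u-u_h)\|+\|\nabla u\|_{S_h}\le C h_P\|f\|$ and, again by the strip estimate plus interpolation, $\|\nabla I_hz\|_{S_h}\le C h_P\|z\|_{H^2(\Omega_1\cup\Omega_2)}\le C h_P\|u-u_h\|$, this term is likewise $\le C h_P^2 c_s\|f\|\,\|u-u_h\|$; dividing by $\|u-u_h\|$ gives the $L^2$-estimate.

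The step I expect to be the main obstacle is the interpolation estimate on the cut patches: one must assemble a single continuous interpolant in $V_h$ from the two one-sided $H^2$-extensions, control the $\mathcal{O}(h_P^2)$ discrepancy of those extensions along $\Gamma_h$ so that the nodal definition of $I_h$ and the continuity of $V_h$ are respected, and verify that \eqref{interpolation} really holds on the strongly anisotropic sub-triangles — this is precisely where the maximum-angle condition of Lemma~\ref{lemma:maxangle} is used. The strip estimate $|S_h|=\mathcal{O}(h_P^2)$, $\|\nabla u\|_{L^2(S_h)}=\mathcal{O}(h_P)$ is the second place where the geometry enters essentially; everything else is the routine Strang and Aubin--Nitsche bookkeeping.
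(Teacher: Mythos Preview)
Your proposal is correct and follows exactly the route taken in the cited references: the paper itself does not prove Theorem~\ref{thm:apriori} but simply refers to \cite{RichterBuch} and \cite{FreiDiss}, where the argument is precisely a first Strang lemma for the $H^1$-bound (anisotropic interpolation under the maximum-angle condition of Lemma~\ref{lemma:maxangle} plus the consistency term $((\kappa_h-\kappa)\nabla u,\nabla e_h)_{S_h}$ controlled via the $\mathcal{O}(h_P^2)$ strip width) followed by an Aubin--Nitsche duality for the $L^2$-bound. The two technical ingredients you single out as the main obstacles---building a continuous interpolant on cut patches from one-sided $H^2$-extensions and the strip estimate $\|\nabla u\|_{L^2(S_h)}\le Ch_P\|u\|_{H^2(\Omega_1\cup\Omega_2)}$---are exactly the places where those references spend their effort, so your identification of the difficulties is accurate.
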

\begin{proof}
For the proof, we refer to \cite{RichterBuch} or \cite{FreiDiss}.
 \end{proof}

{
\section{Hierarchical basis functions}
\label{sec_hierarchical}

 The drawback of {the previously described} simple approach is that the condition number of the system matrix is unbounded
 for certain anisotropies ($r,s\to 0$). This is an unresolved issue in many of the presently used 
 enriched finite element methods for interface problems. We refer to \cite{LehrenfeldReusken} or \cite{BaBa12} 
 for 
 two of the few positive results
 in the case of extended finite elements of low-order. 
In our case, this can be circumvented by using a scaled hierarchical 
 finite element basis, that will yield system matrices
$A_h$ that satisfy the optimal bound
$\operatorname{cond}_2(A_h)=\mathcal{O}(h_P^{-2})$ for elliptic problems, with a constant that does not
depend on the position of the interface $\Gamma$ relative to the mesh elements.
  A detailed proof of this result has been given in~\cite{FreiRichter2014}.

\begin{figure}[t]
  \centering
  \resizebox*{0.85\textwidth}{!}{
  
  \begin{picture}(0,0)%
\includegraphics{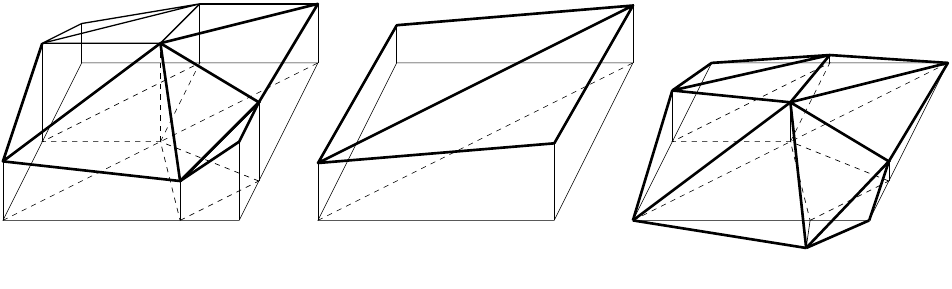}%
\end{picture}%
\setlength{\unitlength}{1657sp}%
\begingroup\makeatletter\ifx\SetFigFont\undefined%
\gdef\SetFigFont#1#2{%
  \fontsize{#1}{#2pt}%
  \selectfont}%
\fi\endgroup%
\begin{picture}(10866,3283)(2668,-5336)
\put(10801,-5236){\makebox(0,0)[lb]{\smash{{\SetFigFont{7}{8.4}{\color[rgb]{0,0,0}$v_b\in V_b$}%
}}}}
\put(6976,-5236){\makebox(0,0)[lb]{\smash{{\SetFigFont{7}{8.4}{\color[rgb]{0,0,0}$v_{2h}\in V_{2h}$}%
}}}}
\put(3376,-5236){\makebox(0,0)[lb]{\smash{{\SetFigFont{7}{8.4}{\color[rgb]{0,0,0}$v_h\in V_h$}%
}}}}
\end{picture}%
  
  }
  \caption{Example for a hierarchical splitting of a function $v_h\in
    V_h$ into coarse mesh part $v_{2h}\in V_{2h}$ and fine mesh fluctuation
    $v_b\in V_b$.}
  \label{fig:hierarchical}
\end{figure}

\begin{figure}[t]
  \centering
 \resizebox*{0.7\textwidth}{!}{
 
 \begin{picture}(0,0)%
\includegraphics{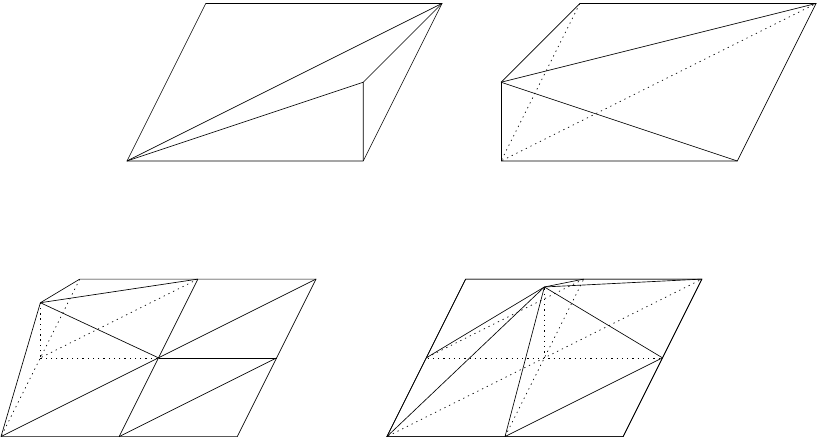}%
\end{picture}%
\setlength{\unitlength}{1657sp}%
\begingroup\makeatletter\ifx\SetFigFont\undefined%
\gdef\SetFigFont#1#2{%
  \fontsize{#1}{#2pt}%
  \selectfont}%
\fi\endgroup%
\begin{picture}(9339,4974)(6199,-4123)
\end{picture}%
 }%

  \caption{Local basis functions of the hierarchical finite element space. 
  Top: Two of the four basis functions $\phi_i^{2h} \in V_{2h}$. Bottom: Two of the five basis functions
  $\phi_i^b \in V_b$}
  \label{fig:HierarchicalBasis}
\end{figure}

We split the finite element space $V_h$ in a hierarchical manner 
\[
V_h = V_{2h} + V_b,\quad  N:=\operatorname{dim}(V_h)=
\operatorname{dim}(V_{2h}) + \operatorname{dim}(V_b)=:N_{2h}+N_b.
\]
The space $V_{2h}$ is the standard space of piecewise bilinear or
linear functions on the patches $P\in\mathcal{T}_{2h}$ equipped with the usual
nodal Lagrange basis $V_{2h} =
\operatorname{span}\{\phi_{2h}^1,\dots,\phi_{2h}^{N_{2h}}\}$. Patches
cut by the interface are split into two large triangles. 

The space $V_b=V_h\setminus V_{2h}$ collects all functions, that are needed to
enrich $V_{2h}$ to $V_h$. These functions are defined piecewise on the sub-elements
in the remaining 5 degrees of freedom, see
Figure~\ref{fig:hierarchical} for an example of the splitting and Figure~\ref{fig:HierarchicalBasis}
for an illustration of the local basis functions. These basis functions are denoted by  
$V_b=\operatorname{span}\{\phi_b^1,\dots,\phi_b^{N_b}\}$. 
 The finite element space $V_{2h}$ on the other hand is fully isotropic
 and standard analysis holds. Functions
in $V_{2h}$ do not resolve the interface, while the basis functions
$\phi_b^i\in V_b$ will depend on the interface 
location if $\Gamma\subset \operatorname{supp}\,\phi_b^i$. 

In order to define the hierarchical ansatz
space, we have to modify some of the basic triangles in the cases A, B and C,
see Figure~\ref{basisfunctions}. In contrast to Section~\ref{sec:fe}, the midpoint can be moved along one 
of the diagonal lines only, such that the space $V_{2h}$ can be defined as
space of piecewise linear functions on two large triangles. 
Note that in order to guarantee a maximum angle condition
in the cases A.1 and C.1 in Figure~\ref{basisfunctions}, we must also move 
the outer node $x_2$ belonging to the space $V_b$, due to the additional constraint on the position of $m_P$.

\begin{figure}[t]
  \centering
  \resizebox*{0.85\textwidth}{!}{
  
  \begin{picture}(0,0)%
\includegraphics{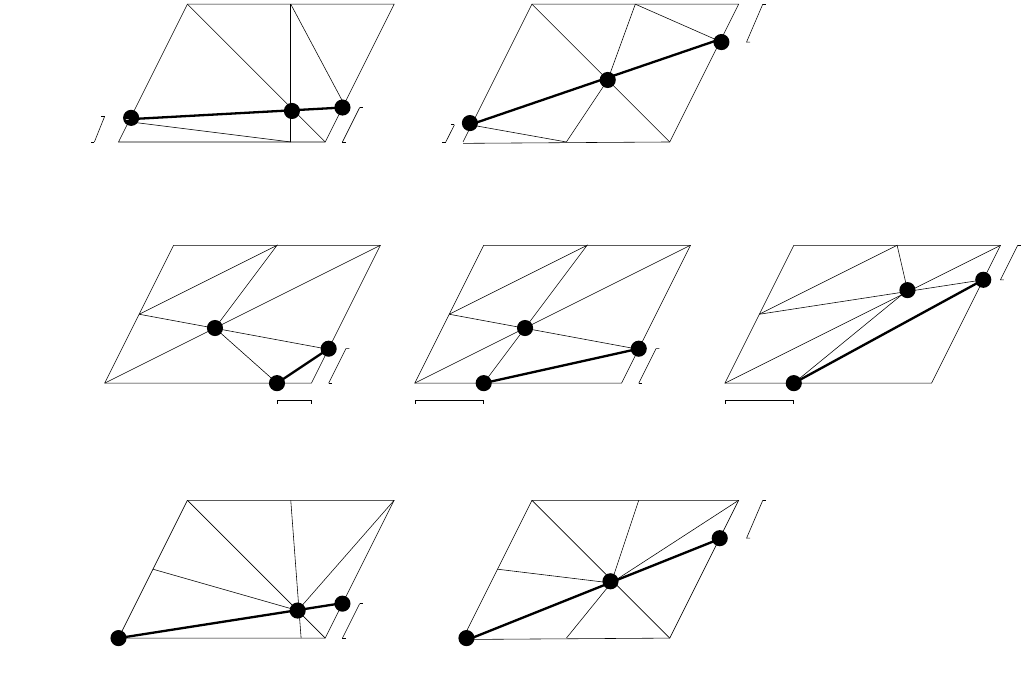}%
\end{picture}%
\setlength{\unitlength}{1450sp}%
\begingroup\makeatletter\ifx\SetFigFont\undefined%
\gdef\SetFigFont#1#2{%
  \fontsize{#1}{#2pt}%
  \selectfont}%
\fi\endgroup%
\begin{picture}(13347,8847)(5836,-7996)
\put(9496,-4651){\makebox(0,0)[lb]{\smash{{\SetFigFont{6}{7.2}{\color[rgb]{0,0,0}$r$}%
}}}}
\put(10351,-3976){\makebox(0,0)[lb]{\smash{{\SetFigFont{6}{7.2}{\color[rgb]{0,0,0}$s$}%
}}}}
\put(15346,-4651){\makebox(0,0)[lb]{\smash{{\SetFigFont{6}{7.2}{\color[rgb]{0,0,0}$r$}%
}}}}
\put(19036,-2761){\makebox(0,0)[lb]{\smash{{\SetFigFont{6}{7.2}{\color[rgb]{0,0,0}$s$}%
}}}}
\put(14401,-3976){\makebox(0,0)[lb]{\smash{{\SetFigFont{6}{7.2}{\color[rgb]{0,0,0}$s$}%
}}}}
\put(11296,-4651){\makebox(0,0)[lb]{\smash{{\SetFigFont{6}{7.2}{\color[rgb]{0,0,0}$r$}%
}}}}
\put(5851,-3211){\makebox(0,0)[lb]{\smash{{\SetFigFont{8}{9.6}{\color[rgb]{0,0,0}\textbf{B}}%
}}}}
\put(5851,-61){\makebox(0,0)[lb]{\smash{{\SetFigFont{8}{9.6}{\color[rgb]{0,0,0}\textbf{A}}%
}}}}
\put(16516,-4786){\makebox(0,0)[lb]{\smash{{\SetFigFont{8}{9.6}{\color[rgb]{0,0,0}3}%
}}}}
\put(12601,-4831){\makebox(0,0)[lb]{\smash{{\SetFigFont{8}{9.6}{\color[rgb]{0,0,0}2}%
}}}}
\put(8416,-4876){\makebox(0,0)[lb]{\smash{{\SetFigFont{8}{9.6}{\color[rgb]{0,0,0}1}%
}}}}
\put(5851,-6496){\makebox(0,0)[lb]{\smash{{\SetFigFont{8}{9.6}{\color[rgb]{0,0,0}\textbf{C}}%
}}}}
\put(15886,-6001){\makebox(0,0)[lb]{\smash{{\SetFigFont{6}{7.2}{\color[rgb]{0,0,0}$r$}%
}}}}
\put(10486,-7351){\makebox(0,0)[lb]{\smash{{\SetFigFont{6}{7.2}{\color[rgb]{0,0,0}$r$}%
}}}}
\put(8641,-7981){\makebox(0,0)[lb]{\smash{{\SetFigFont{8}{9.6}{\color[rgb]{0,0,0}1}%
}}}}
\put(13096,-7981){\makebox(0,0)[lb]{\smash{{\SetFigFont{8}{9.6}{\color[rgb]{0,0,0}2}%
}}}}
\put(9631,-7756){\makebox(0,0)[lb]{\smash{{\SetFigFont{5}{6.0}{\color[rgb]{0,0,0}$x_2$}%
}}}}
\put(6751,-826){\makebox(0,0)[lb]{\smash{{\SetFigFont{6}{7.2}{\color[rgb]{0,0,0}$s$}%
}}}}
\put(11251,-961){\makebox(0,0)[lb]{\smash{{\SetFigFont{6}{7.2}{\color[rgb]{0,0,0}$s$}%
}}}}
\put(15886,479){\makebox(0,0)[lb]{\smash{{\SetFigFont{6}{7.2}{\color[rgb]{0,0,0}$r$}%
}}}}
\put(10486,-871){\makebox(0,0)[lb]{\smash{{\SetFigFont{6}{7.2}{\color[rgb]{0,0,0}$r$}%
}}}}
\put(8641,-1501){\makebox(0,0)[lb]{\smash{{\SetFigFont{8}{9.6}{\color[rgb]{0,0,0}1}%
}}}}
\put(13096,-1501){\makebox(0,0)[lb]{\smash{{\SetFigFont{8}{9.6}{\color[rgb]{0,0,0}2}%
}}}}
\put(9406,-1276){\makebox(0,0)[lb]{\smash{{\SetFigFont{5}{6.0}{\color[rgb]{0,0,0}$x_2$}%
}}}}
\end{picture}%

  }
  
  \caption{Configuration of the hierarchical basis functions $V_b$ for
    the different patch types. In each sketch, we consider the case
    $r\to 0$ or $s\to 0$ or both.}
  \label{basisfunctions}
\end{figure}

}

\subsection*{Scaling of the basis functions}

Moreover, in order to ensure the optimal bound for the condition number, we have to normalise
the Lagrangian basis functions on the fine scale $\phi_b^i, i=1,...,N_b,$ 
by setting
\begin{align*}
 \tilde{\phi}_b^i := \frac{\phi_b^i}{\|\nabla \phi_b^i\|},
\end{align*}
such that it holds that
  \begin{equation}\label{scaling}
    C^{-1}\le  \|\nabla\tilde{\phi}_b^i\| \le C,\quad
    i=1,\dots,N_b.
  \end{equation}
  
  In a practical implementation,
  one can use the basis
  $\phi_i, i=1,\ldots,N$ to assemble the system matrix $A_h$ and apply
  a simple row- and column-wise scaling with the diagonal elements
  \[
  a_{ij} = (\nabla\phi_j,\nabla\phi_i),\quad
  \tilde a_{ij}:=\frac{a_{ij}}{\sqrt{a_{ii} a_{jj}}}.
  \]
  Alternatively, a simple preconditioning of the 
  linear system can be applied multiplying with the diagonal of the system matrix from left and right
  \[
  \mathbf{A}\mathbf{x} = \mathbf{b} \quad\Leftrightarrow\quad
  \mathbf{D}^{-\frac{1}{2}}\mathbf{A}\mathbf{D}^{-\frac{1}{2}} 
  \widetilde{\mathbf{x}} = D^{-\frac{1}{2}} b,\quad \widetilde{\mathbf{x}}
  = \mathbf{D}^\frac{1}{2}\mathbf{x},
  \]
  where $\mathbf{D} = \operatorname{diag}(a_{ii})$.

\section{Numerical examples}
\label{sec:num}

We now present two numerical examples that
include all different types of interface
cuts (configurations A to D) and arbitrary anisotropies.

\subsection{Example 1: Performance under mesh refinement}
\label{sec_ex_1}
This first example has already been considered to discuss the
interface approximation in Section~\ref{sec:fe}, see Figure~\ref{fig:standardfe} for a
sketch of the configuration. The unit square $\Omega=(-1,1)^2$ is 
split into a ball $\Omega_1=B_R(x_m)$ with radius $R=0.5$, midpoint
$x_m=(0,0)$ and
$\Omega_2=\Omega\setminus\bar\Omega_1$. As diffusion parameters we
choose $\kappa_1=0.1$ and $\kappa_2=1$. We
use the analytical solution
\[
u(x)=\begin{cases}
    -2 \kappa_2 \|x-x_m\|^4, \quad &x \in \Omega_2,\\
    -\kappa_1 \|x-x_m\|^2 +\frac{1}{4}\kappa_1 - \frac{1}{8}\kappa_2
  \quad &x \in \Omega_1,
\end{cases}
\]
to define the right-hand side $f_i:=-\kappa_i\Delta u$ in $\Omega_i$ and the Dirichlet
boundary data. A sketch of
the solution is given on the right side of Figure~\ref{fig:ex1_a}. 

\begin{figure}[t]
\centering
{\includegraphics[width=7cm]{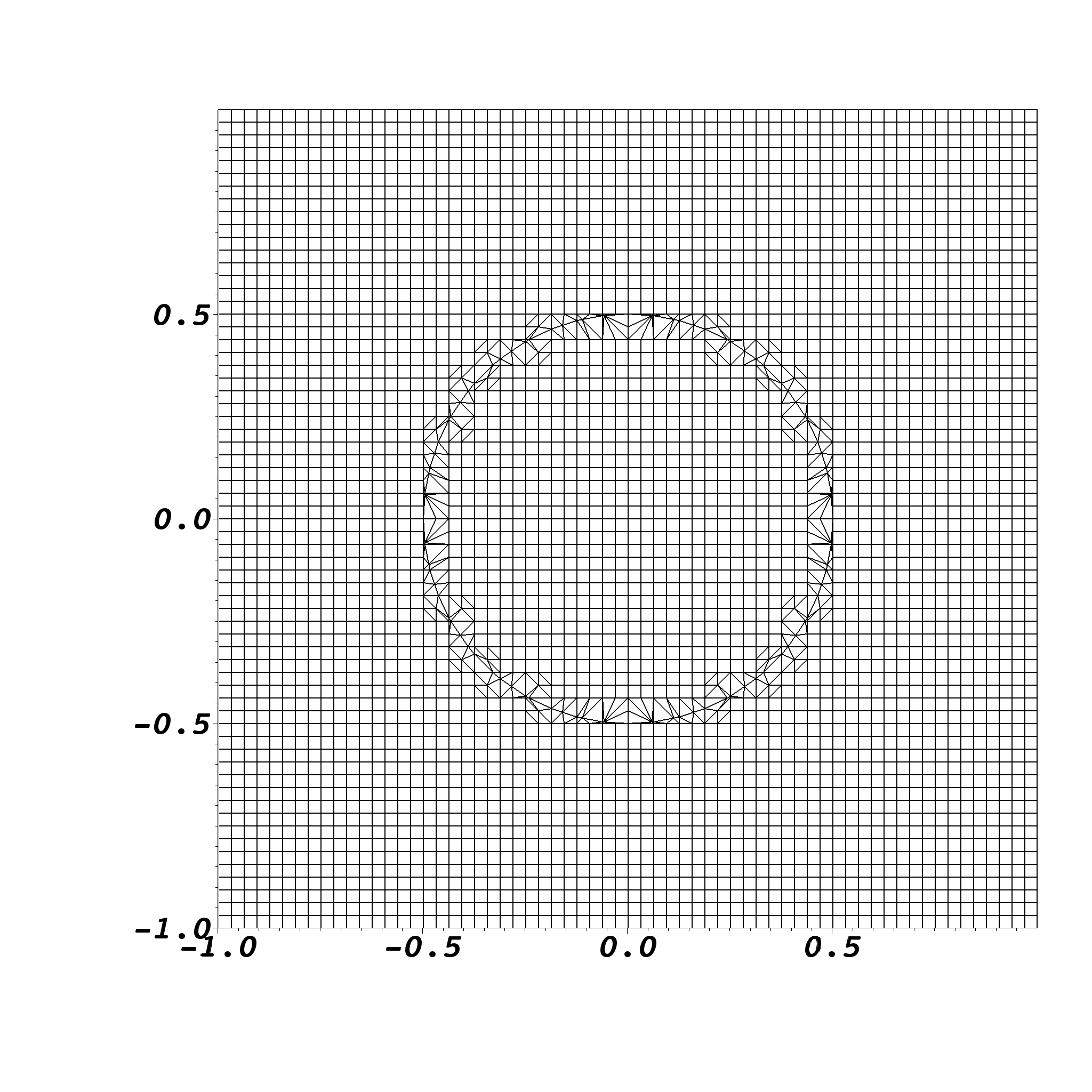}}
{\includegraphics[width=7cm]{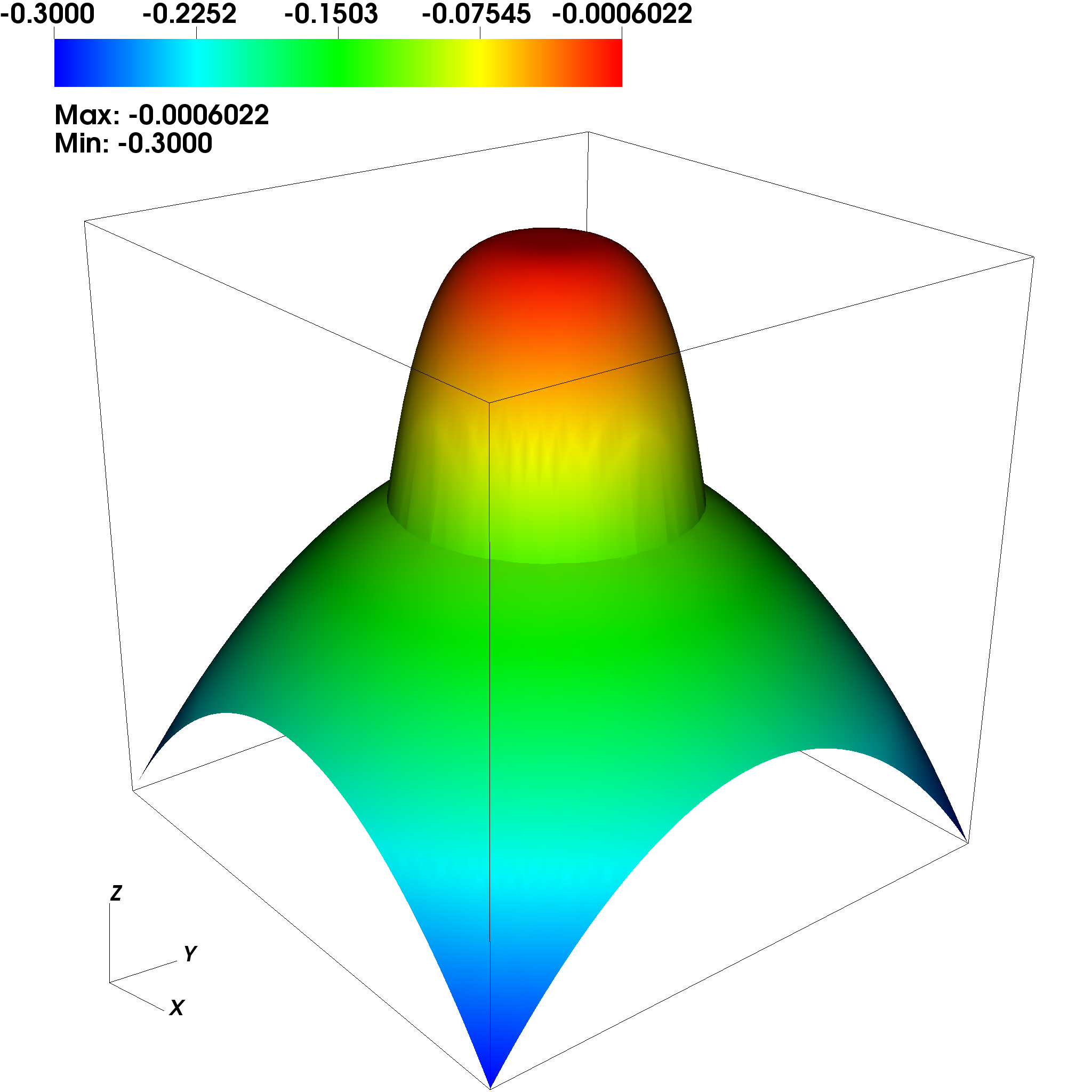}}
\caption{{Example 1: The cut-mesh on level $4$ (left) and a 3D surface plot of the solution (right).}}
\label{fig:ex1_a}
\end{figure}

On the coarsest mesh with 16 patch elements, we have four patches of type D.
After some steps of global refinement this simple example includes the
configurations A to C with different anisotropies. 
In Figure~\ref{fig:ball}, we plot the
$H^1$- and $L^2$-norm errors obtained on several levels of global mesh
refinement. According to Theorem~\ref{thm:apriori}, we observe linear convergence in the
$H^1$-norm and quadratic convergence in the $L^2$-norm.  
For comparison, Figure~\ref{fig:standardfe} shows the corresponding
results using standard non-fitting finite elements. 

\begin{figure}[t]
  \centering
  \begin{minipage}{0.65\textwidth}
    \includegraphics[width=\textwidth]{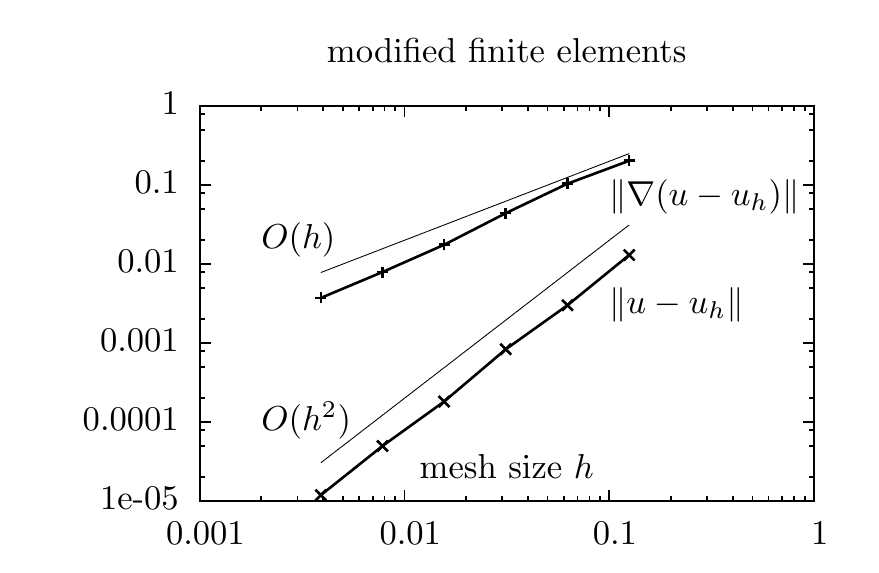}
  \end{minipage}
  
  \caption{Example 1: $H^1$- and $L^2$ errors under mesh
    refinement. 
}\label{fig:ball}    
\end{figure} 

{ As we have shown numerically computed condition numbers for this and the following
example already in \cite{FreiRichter2014}, we provide here
computational evidence that the arising linear systems can be solved with iterative methods 
as the conjugate gradient (CG) instead. We incorporate the scaling of the
basis functions by means of a diagonal preconditioner, as discussed 
in Section~\ref{sec_hierarchical}. In order to analyse the effect of 
the scaling, we compare the performance of the diagonally preconditioned CG method
(dPCG) with a standard CG scheme without preconditioning. Moreover, we also show 
the performance of a CG scheme with SSOR relaxation as preconditioner (SSOR-PCG, 
without a scaling of the basis functions).
For the latter we choose the relaxation parameter 
$\omega = 1.2$, see e.g., \cite{Meister2011}. 
The (absolute) tolerance for the global residual is chosen as $10^{-12}$.

The iteration numbers 
for the non-hierarchical finite element basis introduced in Section~\ref{sec:fe} (nh) and the hierarchical (h) 
variant
described in Section~\ref{sec_hierarchical} in combination 
with the three CG methods are shown in Table~\ref{tab_CG_iter} on different mesh levels,
where each finer mesh is constructed from the coarser one by global mesh refinement.

Theoretically the number of iterations needed to reach a certain tolerance in the CG method
should scale with the square root of the condition number $\mathcal{O}(\sqrt{\kappa})$
(see e.g., \cite{Br07,BuchRichterWick}), i.e.$\,$for
the scaled hierarchical approach with a condition number of order $\kappa = \mathcal{O}(h_P^{-2})$, 
we can expect that the number of iterations grows asymptotically with $\mathcal{O}(h_P^{-1})$. This behaviour
can be observed quite clearly for the preconditioned CG methods in Table~\ref{tab_CG_iter}. The SSOR 
preconditioning seems to work even better than the diagonal preconditioning.
In this example, the expected convergence of the linear solver can be obtained without using 
the hierarchical basis functions.
The use of the hierarchical basis leads however to an advantage
in terms of the absolute numbers of iterations.

For the standard CG method without preconditioning, we observe that the number of iterations
grows faster than $\mathcal{O}(h_P^{-1})$ for both the hierarchical and the non-hierarchical approach.
This has to be expected, as 
the condition number might be unbounded for certain anisotropies. The observation that the iteration numbers
for the scaled non-hierarchical approach seem bounded by $\mathcal{O}(h_P^{-1})$ in this example,
might be due to the fact that not all kind of anisotropies are present
and that the anisotropies that are present do not
necessarily get worse on the finer grids. To study the performance of our approach considering all
kinds of anisotropies (see Figure~\ref{basisfunctions}), we will next move the circular interface 
gradually by small fractions of patch cells in vertical direction.
}


 \begin{table}
   \begin{center}
     \begin{tabular}{cc|ccc|ccc}
       \toprule
        Level & \#Patches  & CG(nh) & dPCG(nh) &SSOR-PCG(nh) & CG(h) & dPCG(h) &SSOR-PCG(h)  \\ \hline
       $0$   & 16    & 10  &10  &15  & 10 &10  &15\\
       $1$   & 64    & 43   &29 &32  &64  &39  &25\\
       $2$   & 256   & 114 &60  &56  &126 &61  &32 \\
       $3$   & 1024  & 253 &124 &97  &197 &95  &47\\
       $4$   & 4096  & 561 &238 &175 &351 &167 &81 \\
       $5$   & 16384 &1436 &484 &335 &881 &322 &150\\
       $6$   & 65536 &3518 &967 &634 &2053 &622 &293\\
       \bottomrule
     \end{tabular}
   \end{center}
     \caption{Example 1: Iteration numbers of the linear solvers on different mesh levels for hierarchical (h) and
       non-hierarchical (nh) versions and the standard CG method compared to a diagonally preconditioned (dPCG)
       and a SSOR-preconditioned CG (SSOR-PCG) approach.}
     \label{tab_CG_iter}
 \end{table}

\subsection{Example 2: Performance for different anisotropies}
\label{sec_ex_2}
To include all kind of anisotropies, we fix the refinement level to the fourth level
of the previous example (4096 patch cells) and move the circular interface gradually in vertical direction. 
Precisely, we move the position of the midpoint by
\begin{align*}
 x_m= (0, \frac{k}{N} h_P)
\end{align*}
for $k=0,...,N-1$, where $N=1000$. Note that for $k=N$, the interface would have been moved by exactly one patch cell,
i.e.$\,$exactly the same cuts as for $k=0$ would appear. The problem and parameters are exactly the same as in the previous
example (note that the exact solution and the data defined above depend on $x_m$). 

The meshes 
for $k=0$ and $k=990$ are shown in Figure \ref{fig:ex2_b}.
Moreover, in order to illustrate the anisotropic sub-cells, a zoom-in of the cut-meshes for $k=0,10,50$ and $990$ is 
displayed in larger in Figure~\ref{fig:ex2_c}.
For $k=0$, we find very anisotropic cells in two patches of 
type C in the patches in the centre; for $k=10$ in four patches of type B; for $k=50$ in two patches of type B 
in the middle and two patches of type A on the left and right; for $k=990$ very anisotropic cells of 
type A are present.

\begin{figure}[t]
\centering
{\includegraphics[width=7cm]{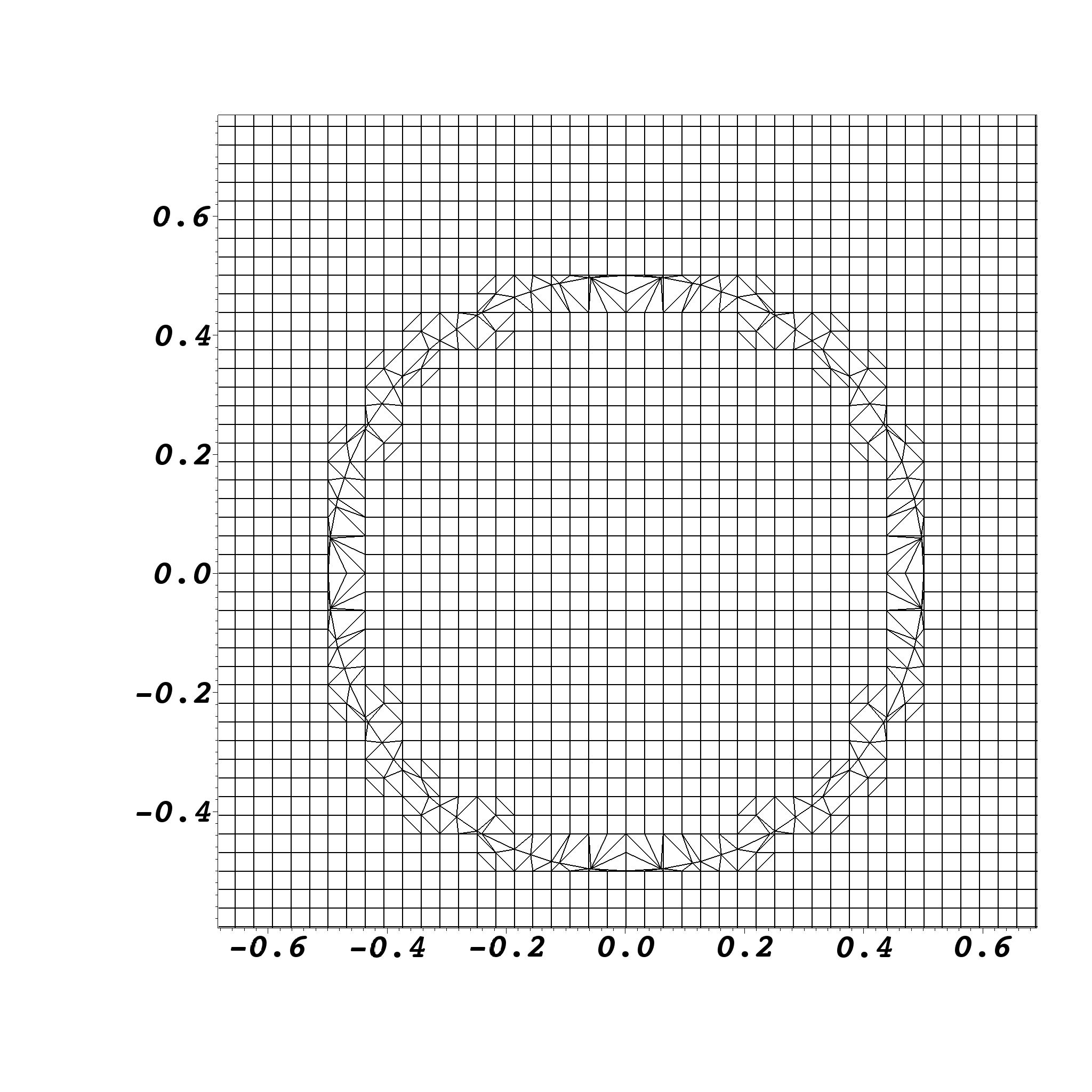}}
{\includegraphics[width=7cm]{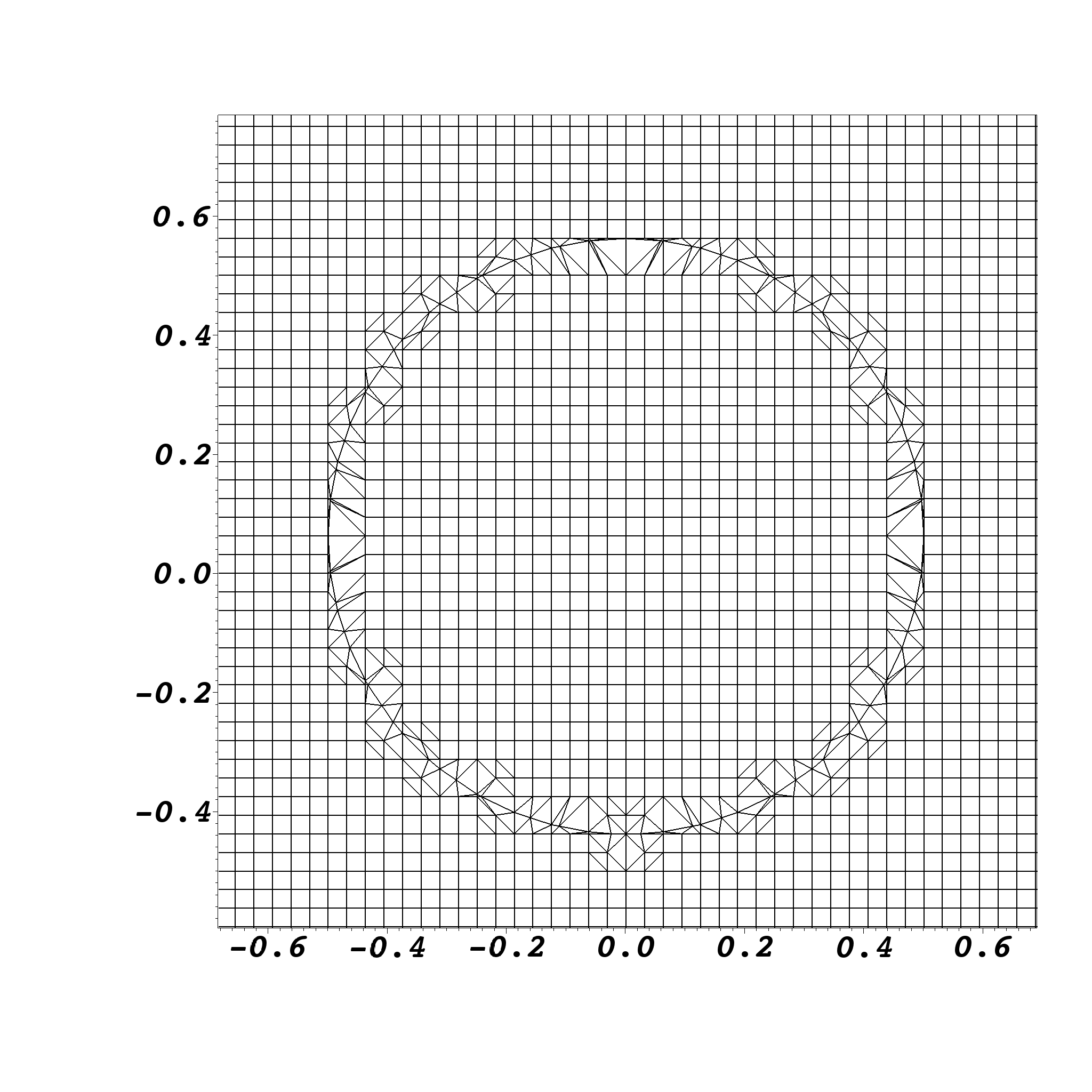}}
\caption{Example 2: The cut-mesh at $k=0$ and $k=990$.}
\label{fig:ex2_b}
\end{figure}

\begin{figure}[t]
\centering
{\includegraphics[width=6.5cm]{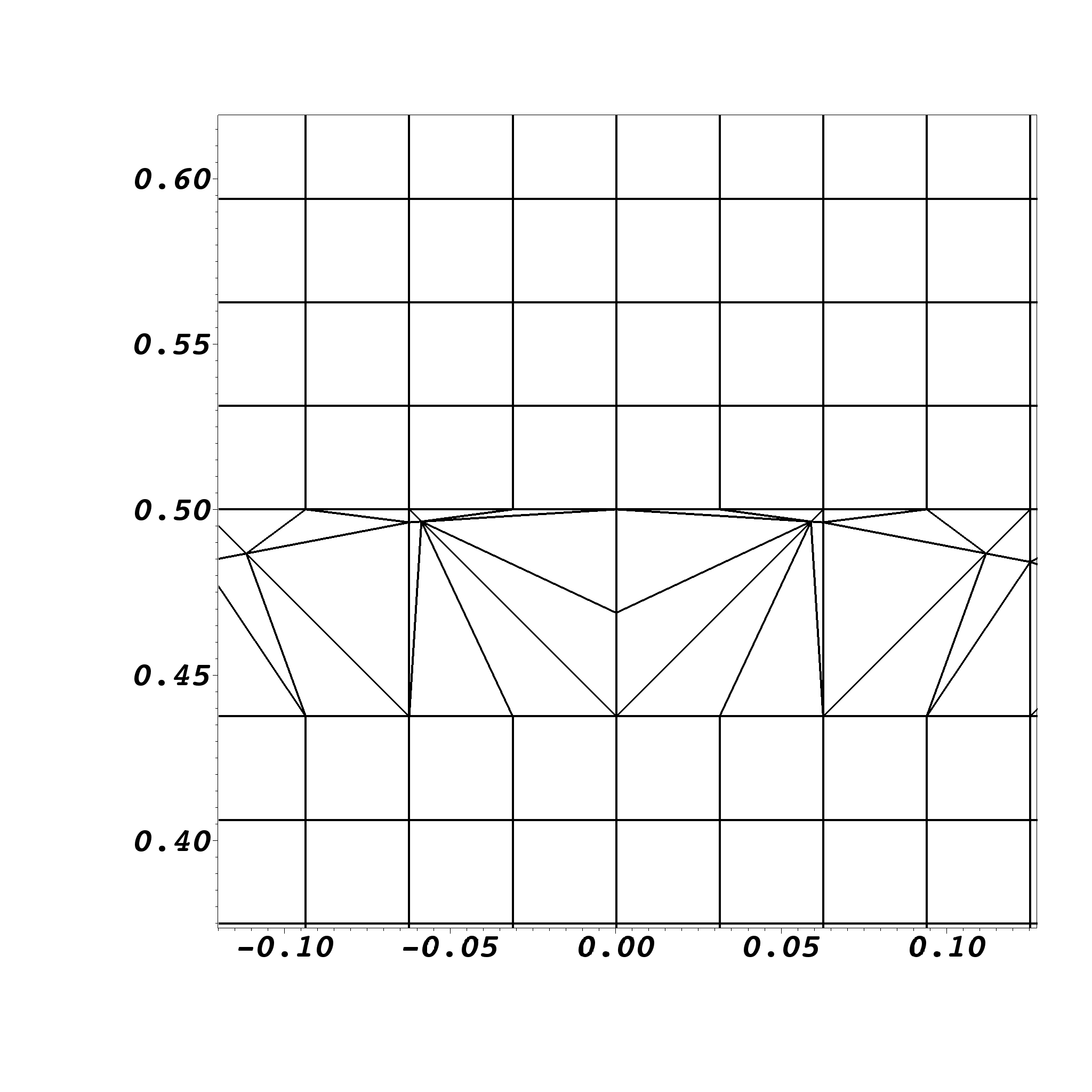}}
{\includegraphics[width=6.5cm]{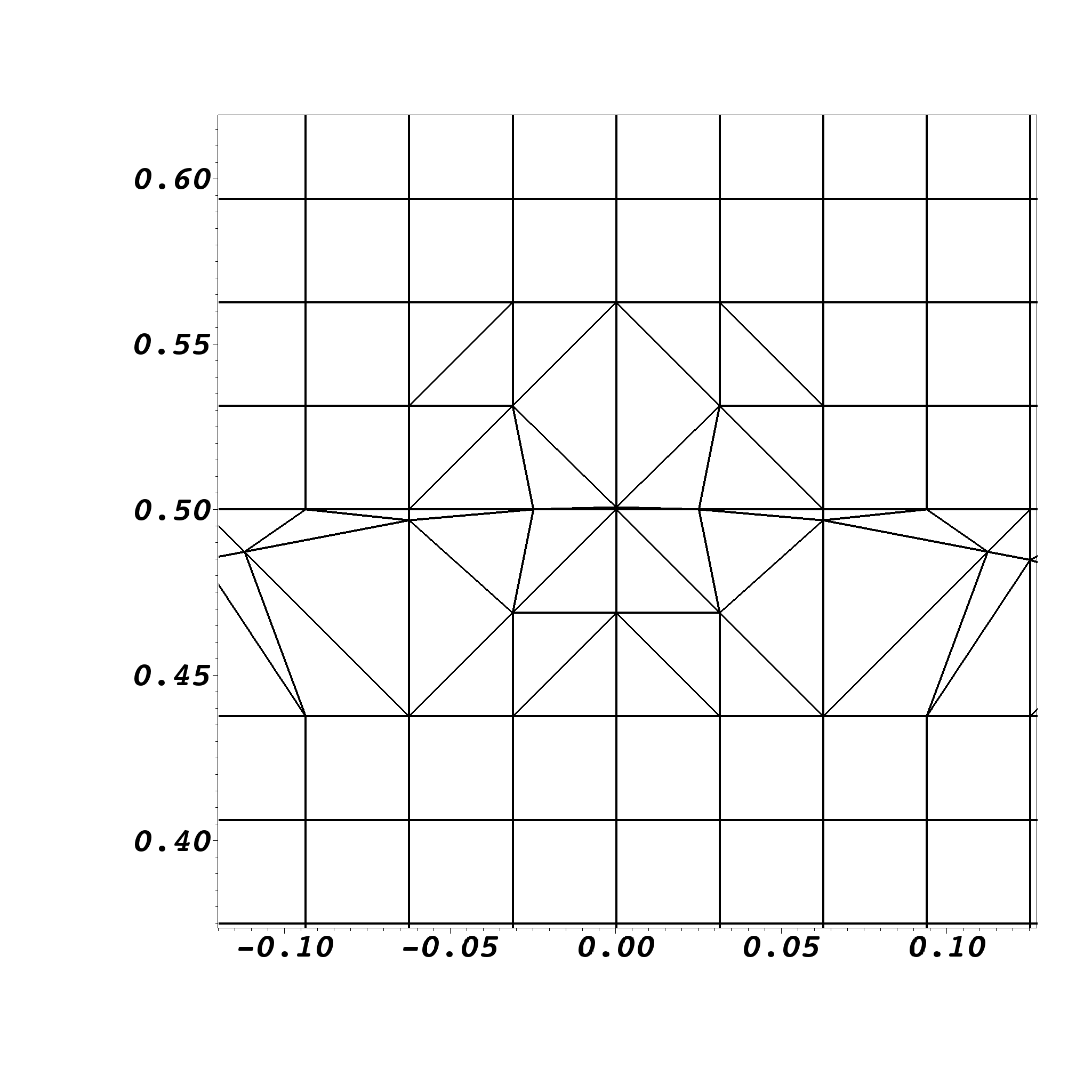}}
{\includegraphics[width=6.5cm]{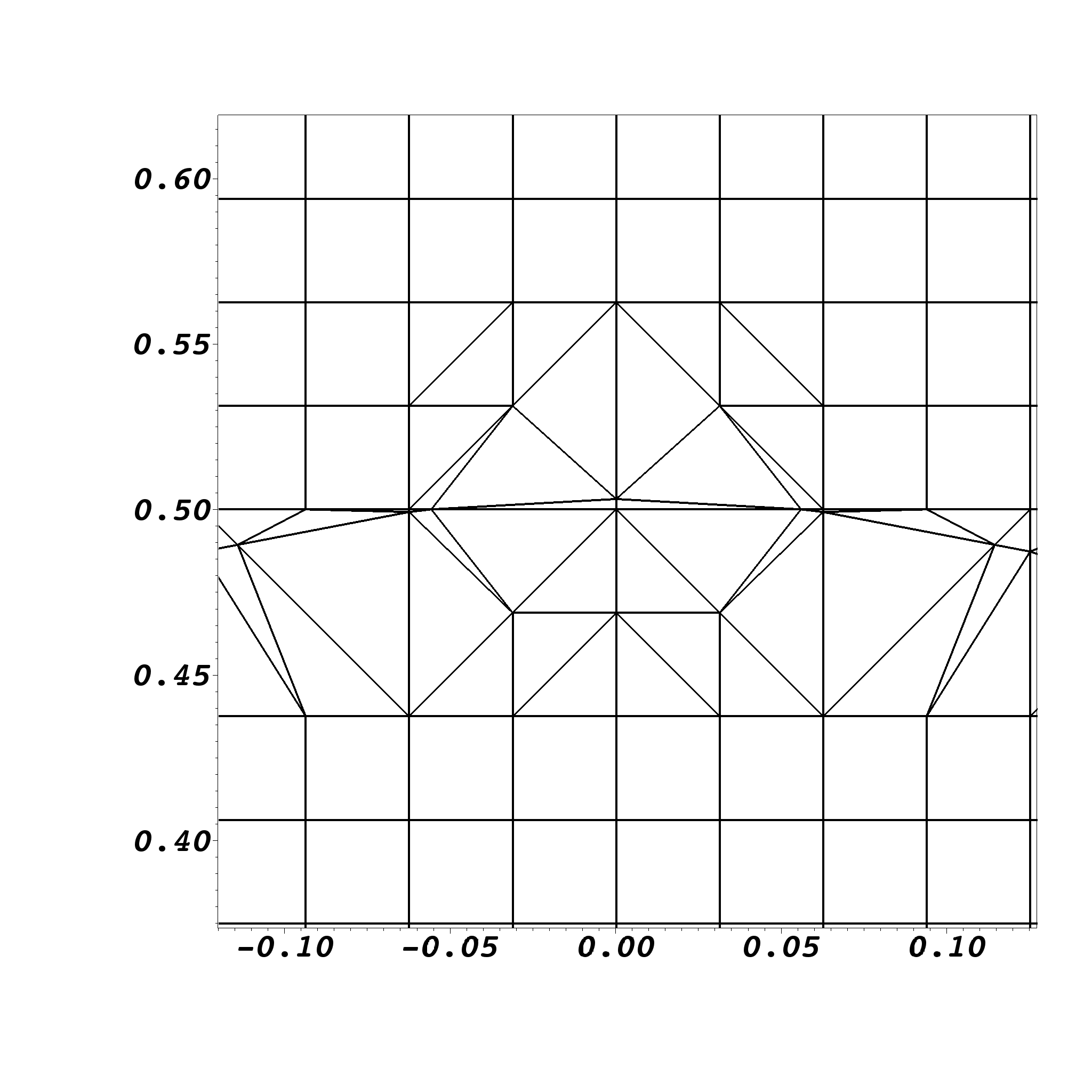}}
{\includegraphics[width=6.5cm]{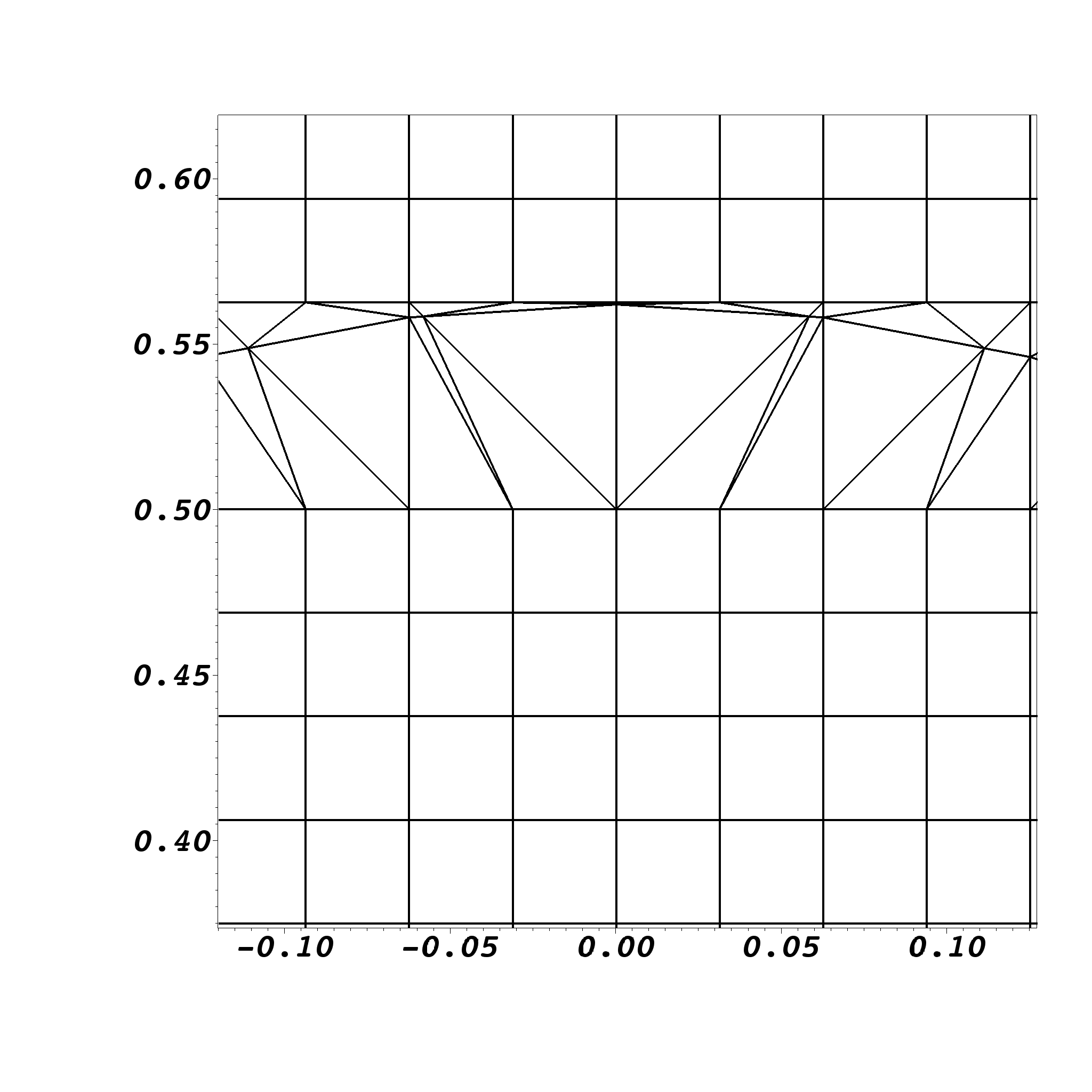}}
\caption{Example 2: Zoom-in at $k=0$ (top left), $k=10$ (top right), 
$k=50$ (bottom left) and $k=990$ (bottom right).}
\label{fig:ex2_c}
\end{figure}

{In Table~\ref{tab.aniso}, we show some properties of the triangulation $\mathcal{T}_h$ consisting of the sub-cells for the four different configurations
shown in Figure~\ref{fig:ex2_c}. The most
anisotropic cells can be found for $k=10$ and $k=990$, where both the largest aspect ratio
\begin{align*}
 \max\limits_{K\in\mathcal{T}_h} \frac{|e_{K,\text{max}}|}{|e_{K,\text{min}}|}
\end{align*}
of an element and the ratio between the largest and the smallest element's size are of order $10^5$. 
Note that due to the symmetry of the problem and the discretisation, the values for $k=10$ and $k=990$ are identical.
The element with the largest aspect ratio can be found on the very left of the circle (and due to symmetry also on the very right, 
see Figure~\ref{fig:ex2_b} on the right), 
where the patch line connecting the vertices $x_1=(-0.5, 0.03125)$ and $x_2=(-0.46875, 0.03125)$ is cut by the interface at $x_s\approx(-0.4999999,0.03125)$.}

\begin{table}
\begin{center}
     \begin{tabular}{c|ccc|ccc}
       \toprule
        k &$|K_{\max}|$ &$|K_{\min}|$ & $\frac{|K_{\max}|}{|K_{\min}|}$ &  $|e_{\text{max}}|$ &  $|e_{\text{min}}|$ & $\max\limits_{K\in\mathcal{T}_h} \frac{|e_{K,\text{max}}|}{|e_{K,\text{min}}|}$ \\ \hline
        0 &$2.44\cdot 10^{-4}$ &$3.82\cdot 10^{-\,6\,\,}$ &$6.39\cdot 10^1$ &$3.45\cdot 10^{-2}$ &$4.89\cdot 10^{-4}$ &$3.20\cdot 10^1$ \\
        10 &$2.50\cdot 10^{-4}$ &$7.63\cdot 10^{-10}$ &$3.28\cdot10^5$ &$3.45\cdot 10^{-2}$ &$9.77\cdot 10^{-8}$ &$1.60\cdot 10^5$\\
        50 &$2.52\cdot 10^{-4}$ &$1.91\cdot 10^{-\,8\,\,}$ &$1.32\cdot10^4$ & $3.45\cdot10^{-2}$ &$2.44\cdot10^{-6}$&$6.40\cdot10^3$ \\
       990 &$2.50\cdot 10^{-4}$ &$7.63\cdot 10^{-10}$ &$3.28\cdot10^5$ &$3.45\cdot 10^{-2}$ &$9.77\cdot 10^{-8}$ &$1.60\cdot 10^5$\\
       \bottomrule
     \end{tabular}
   \end{center}
     \caption{\label{tab.aniso} Properties of the triangulations $\mathcal{T}_h$ consisting of the sub-cells for the four different configurations shown 
     in Figure~\ref{fig:ex2_c}. In columns 2 to 4, we show the area of the largest and the
     smallest element $|K_{\max}|$ and $|K_{\min}|$ and their ratio; in columns 5 and 6 he largest and smallest edge $|e_{\text{max}}|$ and $|e_{\text{min}}|$. 
     Finally, in column 7 the biggest aspect ratio of all elements is shown.     
     }
 
\end{table}

In order to study the dependence of the iteration numbers on the position of the interface, we plot
the number of linear iterations for the three different CG methods and the non-hierarchical and hierarchical basis
in Figure \ref{fig:ex2_a} over the increment $k$. For both the non-hierarchical and the hierarchical approach, we
observe that the iteration numbers decrease by at least a factor of 2 for the diagonal preconditioning and at least by a 
factor of 4 for the SSOR preconditioning compared to the standard CG method.

For the non-hierarchical approach, the iteration numbers depend considerably on the position of the interface,
even after preconditioning. Using the diagonal preconditioning the iteration number varies
between 239 and 585 iterations, for the SSOR preconditioning between 129 and 260 iterations are needed. These numbers 
get worse, when the fineness $N$ is increased. In this example it becomes clear that the non-hierarchical approach shows
a condition number issue, even when preconditioning techniques are used.

For the hierarchical approach the iteration numbers seem to be bounded independently of the position of the interface for
both preconditioning variants. The diagonally preconditioned CG method needs between 163 and 188 linear iterations, the
SSOR preconditioned 
CG method between 62 and 81 iterations. Again the SSOR preconditioned CG method is superior to the simple diagonal 
preconditioning, although our analysis for the condition number is based on the scaling of the hierarchical basis
\eqref{scaling}, which is
only ensured for the diagonal preconditioning.

\begin{figure}[t]
\centering
 \begin{minipage}{0.5\textwidth}
  \includegraphics[width=\textwidth]{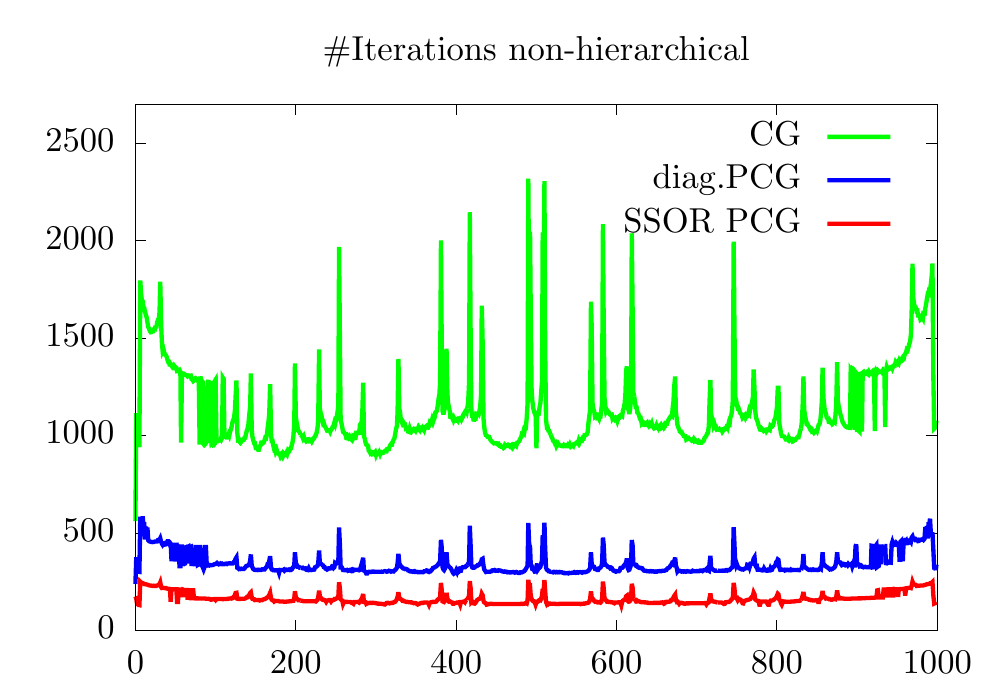}
 \end{minipage}
 \hspace{-0.5cm}
  \begin{minipage}{0.5\textwidth}
    \includegraphics[width=\textwidth]{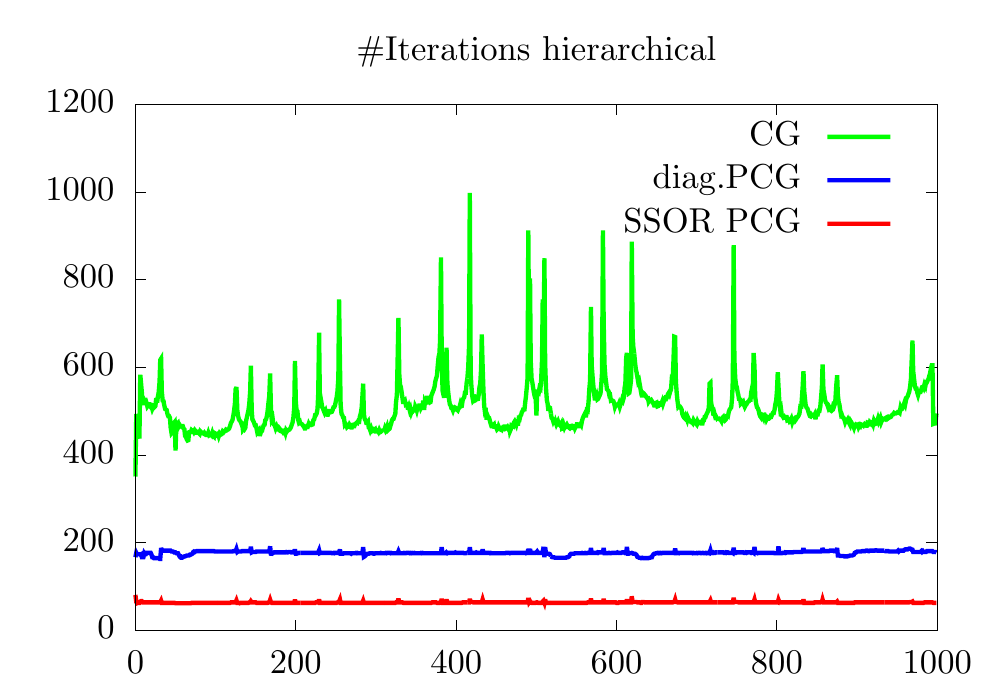}
 \end{minipage}
\caption{Example 2: Number of linear iterations needed for the different CG methods to decrease the residual
below a tolerance of $10^{-12}$ plotted
over the increment $k$, where for $k=1000$ the circular interface has been moved by exactly one patch cell.
\textit{Left}: Non-hierarchical finite element basis. \textit{Right}: Hierarchical basis. }
\label{fig:ex2_a}
\end{figure}

\section{Implementation}
\label{sec.impl}


Our implementation is based on deal.II, version 8.5.0. A short guide on the installation and compilation 
is given in the file \texttt{README.txt}.

We start this section by giving an overview of the basic structure of the source code in Section~\ref{sec.structure}.
Then, we describe the implementation of the level set function in Section~\ref{sec.levelset}. In 
Section~\ref{sec.locmodfe}, we give an overview on the additional steps needed compared to a standard
finite element code and how they are implemented in the class \texttt{LocModFE}. Finally, we show in 
Section~\ref{sec.UsingLocModFE}, how these are incorporated in a standard finite element program.

%
%

\subsection{Structure of the code}
\label{sec.structure}

The source code can be split into three parts, which can be found in the files
\texttt{locmodfe.h} and \texttt{.cc}, \texttt{step-modfe.cc} and \texttt{problem.h}. 
The following lines are copied from the preamble of the file \texttt{README.txt}:\\

\begin{lstlisting} 
 * The source code includes the following files and classes:
 *
 * 1) locmodfe.cc/h: Contain all functions that are specific to the locally 
 *			modified FE method
 *    a) class LocModFEValues : Extends the FEValues class in deal.II, where
 *			the local basis functions on the reference patches 
 *			are evaluated 
 *    b) class LocModFE : Key class of the locally modified finite element 
 *			method
 *
 * 2) step-modfe.cc: 
 *    a) class ParameterReader: Read in parameters from a seperate parameter 
 *			file
 *    b) class InterfaceProblem : local user file similar to many deal.II 
 *			tutorial steps, which controls the general workflow of
 *			the code, for example the solution algorithm, assembly
 *			of system matrix and right-hand side and output
 *    c) int main()
 *
 * 3) problem.h: Problem-specific definition of geometry, boundary conditions
 *			and analytical solution
 *    a) class LevelSet : Implicit definition of interface and sub-domains 
 *    b) classDirichletBoundaryConditions : Definition of the Dirichlet data
 *    c) class ManufacturedSolution : Analytical solution for error estimation
\end{lstlisting}

\subsubsection*{\texttt{locmodfe.h} and \texttt{locmodfe.cc}}
The files \texttt{locmodfe.h} and \texttt{locmodfe.cc} contain 
all functions that are specific for the locally modified finite element discretisation. The class \texttt{LocModFEValues} extends 
the \texttt{FEValues} class in \texttt{deal.II}. In this 
class the values of the basis functions and their gradients
(in deal.II ``shape functions'') as well as the derivatives of the map $\hat{T}_P$ are evaluated in quadrature points on the reference patch, 
depending on the reference patch type ($\hat{P}_0,...,\hat{P}_3$) and the boolean parameter \texttt{\_hierarchical}, which specifies if
a hierarchical basis is to be used.

In the class \texttt{LocModFE}, we check if patches are cut
and in which sub-domains they are (function \texttt{set\_material\_ids}), define the type of the cut (configurations A,...,D),  the reference patch type
($\hat{P}_0,...,\hat{P}_3$)
and the local mappings $\hat{T}_P$ (function \texttt{init\_FEM}).
Moreover, we initialise the respective quadrature formulas depending on the reference patches (function \texttt{compute\_quadrature}, 
more details on the quadrature will be given below), 
provide functions to compute norm errors (function \texttt{integrate\_difference\_norms}), to set Dirichlet boundary values in cut patches 
(function \texttt{interpolate\_boundary\_values})
and to visualise the solution (\texttt{plot\_vtk}).

\subsubsection*{\texttt{step-modfe.cc}}
In the file \texttt{step-modfe.cc}, we find the \texttt{main()} function and the classes
\texttt{ParameterReader} and \texttt{InterfaceProblem}.
The class \texttt{ParameterReader} is used to read in parameters from a parameter file, as in many \texttt{deal.II} tutorial steps.
The class \texttt{InterfaceProblem} can also be found similarly in many of the local user files in the tutorial steps. It contains 
for example the loops of the Newton iteration as well as functions to assemble the right-hand side and the system matrix. They differ
from other \texttt{deal.II} steps
only, when specific functions from the \texttt{LocModFE} class need to be used. The main modifications that are required for the locally 
modified finite element method will be explained in detail in the next section.

\subsubsection*{\texttt{problem.h}}
Finally, the file \texttt{problem.h} contains three classes, where the
geometry, 
the Dirichlet boundary data and the analytical solution 
for the specific example to be solved are specified.

\subsection{The Level set function}
\label{sec.levelset}
  
  In order to assign an element type to a patch, let us assume,
  that the interface is represented as zero-contour of a Level-Set function $\chi(x)$. In our examples, 
  the function $\chi(x)=\|x-x_m\|^2 - 0.25, x_m=(0,y_{\rm offset})$ 
  is specified by the following expressions in the class \texttt{LevelSet} in the file 
  \texttt{problem.h}:  
  \begin{lstlisting}
template <int dim> class LevelSet
{
 ...
 public:
 
  // Compute value of the LevelSet function in a point p
  double dist(const Point<dim> p) const
  {
    return p(0)*p(0) + (p(1)-_yoffset)*(p(1)-_yoffset) -0.5*0.5;
  }

  // Derivatives for Newton's method to find cut position
  double dist_x(const Point<dim> p) const
  {
    return 2.0*p(0);
  }
  double dist_y(const Point<dim> p) const
  {
    return 2.0*(p(1)-_yoffset);
  }

  //Determine domain affiliation of a point p
  int domain(const Point<dim> p) const
  {
    double di = dist(p);
    if (di>=0) return 1;
    else return -1;
  }
  ...
};
  \end{lstlisting}
  The function \texttt{double dist(...)} can be used to obtain the value of $\chi$ in a point p. Moreover, we 
  provide the derivatives \texttt{double dist\_x(...)} and \texttt{double dist\_y(...)}, which will be needed by a Newton
  method to find the position, at which the interface cuts an exterior edge (see Point 3 below). By means of the 
  function \texttt{int domain(...)}, we obtain the index of the sub-domain, in which $p$ lies.

  \begin{figure}[t]
  \centering
  \begin{picture}(0,0)%
\includegraphics{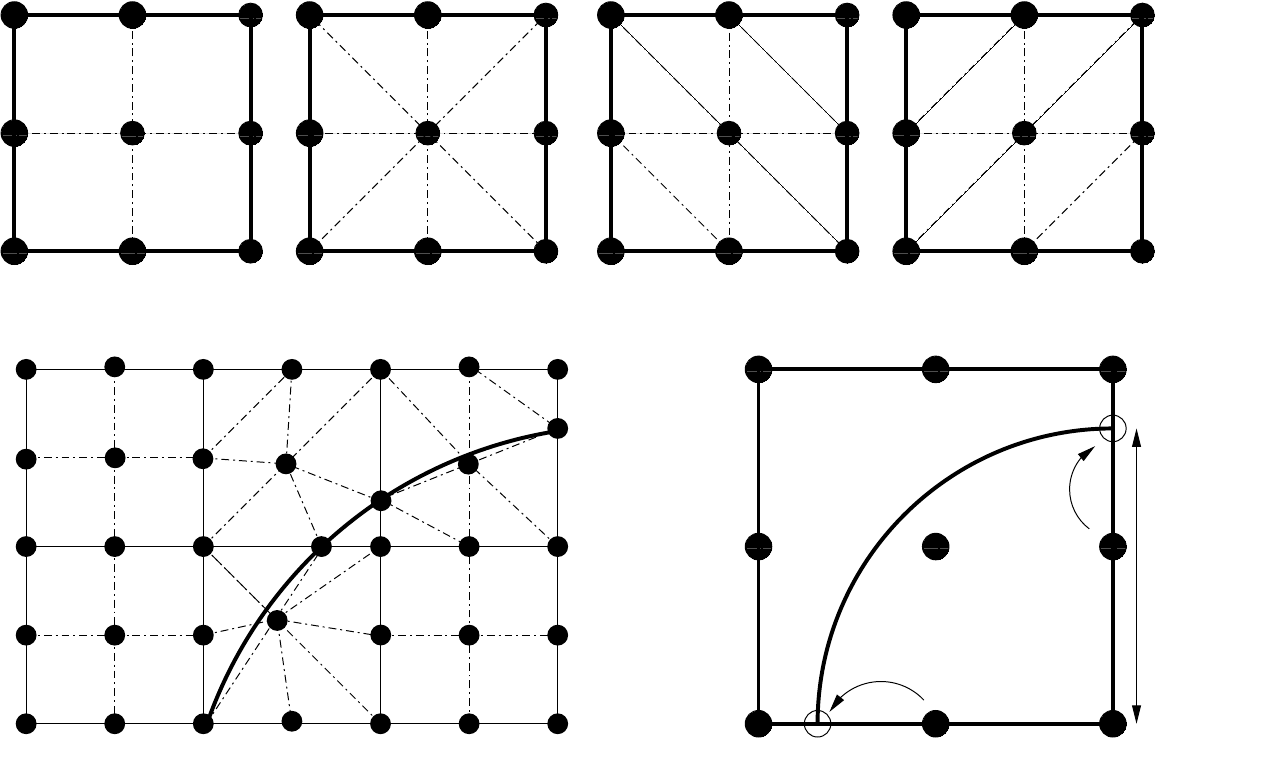}%
\end{picture}%
\setlength{\unitlength}{2486sp}%
\begingroup\makeatletter\ifx\SetFigFont\undefined%
\gdef\SetFigFont#1#2{%
  \fontsize{#1}{#2pt}%
  \selectfont}%
\fi\endgroup%
\begin{picture}(9709,5789)(1197,-5741)
\put(6076,-1636){\makebox(0,0)[lb]{\smash{{\SetFigFont{8}{9.6}{\color[rgb]{0,0,0}$\hat{P}_2$}%
}}}}
\put(7426,-3211){\makebox(0,0)[lb]{\smash{{\SetFigFont{8}{9.6}{\color[rgb]{0,0,0}$\chi<0$}%
}}}}
\put(8326,-5011){\makebox(0,0)[lb]{\smash{{\SetFigFont{8}{9.6}{\color[rgb]{0,0,0}$\chi>0$}%
}}}}
\put(9946,-4516){\makebox(0,0)[lb]{\smash{{\SetFigFont{8}{9.6}{\color[rgb]{0,0,0}$s$}%
}}}}
\put(8416,-3796){\makebox(0,0)[lb]{\smash{{\SetFigFont{8}{9.6}{\color[rgb]{0,0,0}$\psi=0$}%
}}}}
\put(10891,-4426){\makebox(0,0)[lb]{\smash{{\SetFigFont{8}{9.6}{\color[rgb]{0,0,0}$=0$}%
}}}}
\put(1531,-1636){\makebox(0,0)[lb]{\smash{{\SetFigFont{8}{9.6}{\color[rgb]{0,0,0}$\hat{P}_0$}%
}}}}
\put(3736,-1276){\makebox(0,0)[lb]{\smash{{\SetFigFont{8}{9.6}{\color[rgb]{0,0,0}$\hat{P}_1$}%
}}}}
\put(8281,-1276){\makebox(0,0)[lb]{\smash{{\SetFigFont{8}{9.6}{\color[rgb]{0,0,0}$\hat{P}_3$}%
}}}}
\put(9226,-2626){\makebox(0,0)[lb]{\smash{{\SetFigFont{8}{9.6}{\color[rgb]{0,0,0}$v_2$}%
}}}}
\put(9226,-5326){\makebox(0,0)[lb]{\smash{{\SetFigFont{8}{9.6}{\color[rgb]{0,0,0}$v_1$}%
}}}}
\put(10126,-4021){\makebox(0,0)[lb]{\smash{{\SetFigFont{8}{9.6}{\color[rgb]{0,0,0}$\chi(v_1+s(v_2-v_1))$}%
}}}}
\put(6571,-5686){\makebox(0,0)[lb]{\smash{{\SetFigFont{8}{9.6}{\color[rgb]{0,0,0}$-$}%
}}}}
\put(9811,-5686){\makebox(0,0)[lb]{\smash{{\SetFigFont{8}{9.6}{\color[rgb]{0,0,0}$+$}%
}}}}
\put(9811,-2716){\makebox(0,0)[lb]{\smash{{\SetFigFont{8}{9.6}{\color[rgb]{0,0,0}$-$}%
}}}}
\put(6571,-2671){\makebox(0,0)[lb]{\smash{{\SetFigFont{8}{9.6}{\color[rgb]{0,0,0}$-$}%
}}}}
\put(1576,-5281){\makebox(0,0)[lb]{\smash{{\SetFigFont{8}{9.6}{\color[rgb]{0,0,0}$\hat{P}_0$}%
}}}}
\put(1576,-3931){\makebox(0,0)[lb]{\smash{{\SetFigFont{8}{9.6}{\color[rgb]{0,0,0}$\hat{P}_0$}%
}}}}
\put(4231,-5281){\makebox(0,0)[lb]{\smash{{\SetFigFont{8}{9.6}{\color[rgb]{0,0,0}$\hat{P}_0$}%
}}}}
\put(3556,-5371){\makebox(0,0)[lb]{\smash{{\SetFigFont{8}{9.6}{\color[rgb]{0,0,0}$\hat{P}_1$}%
}}}}
\put(5176,-3661){\makebox(0,0)[lb]{\smash{{\SetFigFont{8}{9.6}{\color[rgb]{0,0,0}$\hat{P}_2$}%
}}}}
\put(2836,-3796){\makebox(0,0)[lb]{\smash{{\SetFigFont{8}{9.6}{\color[rgb]{0,0,0}$\hat{P}_3$}%
}}}}
\end{picture}%

    \caption{Implementation of the parametric patch-based
      approach. \textit{Top row:} Four different reference patches. \textit{Lower left:}
      Sample mesh with patches corresponding to all four variants. \textit{Lower right:} Identification of
      the cut points by means of the level set function $\chi$.}
    \label{fig:impl}
  \end{figure}

  \subsection{Implementation of the class \texttt{LocModFE}}
  \label{sec.locmodfe}
  
  Before we describe the additional steps needed for the locally modified finite element approach in detail,
  let us note that a patch is affected by the interface
  if $\chi$ shows different signs in two of the four outer vertices. 
  In the same way, we identify the edges cut by the interface. Let $v_1$
  and $v_2$ be the two outer nodes of an edge with
  $\chi(v_1)>0>\chi(v_2)$, see Figure~\ref{fig:impl}. The exact coordinate 
  where the interface line crosses an
  edge, can be found by a simple Newton method to find the zero $s_0$ of 
  \[
  f(s)=\chi\big(v_1+s( v_2- v_1)\big)=0.
  \]
  
  The following steps are executed in each patch $P\in \mathcal{T}_{2h}$ before the system matrix 
  and right-hand side are assembled. Note that all these operations are local operations on the patch level:
  \begin{enumerate}
   \item We equip the four exterior vertices $v_i, i=0,...,3$ of the patches with a colour (-1 or 1), based on the value of 
   the function \texttt{domain}($v_i$) in the LevelSet class.
   \item We equip each patch with a color (-1,0 or 1): -1 and 1 if the color of the four vertices in 1.$\,$is -1 or 1 for all of them, respectively;
   0 for interface patches with vertices in both sub-domains.
   \item If $P$ is an interface patch, find the two edges $e_1$ and $e_2$ affected by the interface by checking the color of the end vertices $v_1$ and $v_2$ as in 2.$\,$and compute 
   the exact cut position on both edges by using Newton's method to find the zeros $s_0$ of 
   \[
  f(s) = \chi\big(v_1+s( v_2- v_1)\big).
  \]
  \item Specify the type of the cut (configuration A,...,D) and define the reference patch type $\hat{P}_0,...,\hat{P}_3$.
  \item Define the local mapping $\hat{T}_P: \hat{P}_i \to P$ by means of the position of the 9 vertices in the physical patch $P$: 
        The degrees of freedom of the two edges $e_1$ and $e_2$ affected by the interface are moved to the point $v_1+s_0( v_2- v_1)$. The position of 
        the midpoint depends on the configuration $A,...,D$ (see Sections~\ref{sec:fe} and~\ref{sec_hierarchical}).
   \item Choose one of the four quadrature formulas, depending on the reference patch $\hat{P}_0,...,\hat{P}_3$.     
  \end{enumerate}

\subsubsection*{Step 1 and 2 (implemented in \texttt{set\_material\_ids})}

 We will provide now some code snippets to illustrate how these steps are implemented in the class \texttt{LocModFE}. Step 1 and 2 are implemented in the function
 \texttt{void set\_material\_ids}:
\begin{lstlisting}
template <int dim>
void LocModFE<dim>::set_material_ids (const DoFHandler<dim> &dof_handler,
			 const Triangulation<dim> &triangulation)
{
  ...
  unsigned int subdom1_counter;

  for (unsigned int cell_counter = 0; cell!=endc; ++cell, cell_counter++)
    {
      subdom1_counter = 0;

      for (unsigned int v=0; v<GeometryInfo<dim>::vertices_per_cell; ++v)
      {
        //First determine the sub-domain of the four outer vertices
        double chi_local = chi.domain(cell->vertex(v));
        node_colors[cell->vertex_index(v)] = chi_local;
       
        if (chi_local > 0) subdom1_counter ++;
      }
       
      //Based on the colors of the vertices, specify a color for the patches
      //(0 stands for an interface patch)
      if (subdom1_counter == 4)
        cell_colors[cell_counter] = 1;
      else if (subdom1_counter == 0)
        cell_colors[cell_counter] = -1;
      else 
        cell_colors[cell_counter] = 0;
       
    }
}
\end{lstlisting}

First, we set in line 16 the \texttt{node\_color} for each of the four outer vertices of the patch,
based on the value of the Level set function \texttt{chi} (step 1). Moreover, we count the number of 
outer vertices of the patch lying in sub-domain 1 (line 18) by means of the counter 
\texttt{subdom1\_counter}.
If the result is 0 or 4, the patch lies completely in one sub-domain and the \texttt{node\_color} of the four vertices 
(-1 or 1) is set as \texttt{cell\_color} for the patch; otherwise we set the \texttt{cell\_color} to 0, which 
corresponds to an interface patch (line 28).

\subsubsection*{Step 3 to 5 (implemented in \texttt{init\_FEM})}
The steps 3-5 can be found in the function 
\begin{lstlisting}
 void init_FEM(const typename DoFHandler<dim>::active_cell_iterator &cell,
      unsigned int cell_counter, FullMatrix<double> &M, 
      const unsigned int dofs_per_cell, unsigned int &femtype_int, 
      std::vector<double> &LocalDiscChi, std::vector<int>& NodesAtInterface);
\end{lstlisting}
As the implementation of this function is quite lengthy, let us only discuss its outputs:
The resulting reference patch type (step 4) is written to the variable \texttt{femtype\_int}. As 
shown in \eqref{TP}, the map 
$\hat{T}_P$ can be parametrised by the coordinates of the nine vertices $x_i^P, i=1,...,9$ in the physical patch $P$.
These are memorised in the $2\times 9$-matrix \texttt{M}. Moreover, we would like to mention the vector \texttt{LocalDiscChi}, 
which contains the nine values of the Level set function $\chi(x_i^P)$ in the vertices. These parametrise a discrete level set function 
$\chi_h$, that will be used in the computations, see the following paragraph.

\subsubsection*{Step 6 (implemented in \texttt{compute\_quadrature})}

For the choice of the quadrature formula depending on the reference patch type (step 6), we use the function
\begin{lstlisting}
Quadrature<dim> compute_quadrature (int femtype);
\end{lstlisting}
The four different quadrature formulas that can be chosen are defined in the function
\begin{lstlisting}
void initialize_quadrature();
\end{lstlisting}
that has to be called once in the beginning of the program (for example within the function \texttt{run}, see Section~\ref{sec.UsingLocModFE}). The integration
points are chosen as the four Gauss points
of the Gaussian integration formula of order one in each of the sub-quadrilaterals and as the three Gauss points 
of the corresponding Gaussian integration formula in each of the sub-triangles. This results in a total of 16 integration points in 
regular patches and of 24 integration points in interface patches.

\subsection{Using the functions of the class \texttt{LocModFE} in a standard finite element program}
\label{sec.UsingLocModFE}

In order to access the functions of the class \texttt{LocModFE} we have added the object
\begin{lstlisting}
 LocModFE<dim> lmfe;
\end{lstlisting}
as a member to the user class \texttt{InterfaceProblem}.

\subsubsection*{The \texttt{run} method}
As in {almost all} \texttt{deal.II} tutorial steps, the workflow of the code is controlled by the function \texttt{void run()}
of the user class \texttt{InterfaceProblem}. 
We show this function here for test case 2, skipping some lines 
with \texttt{'...'} that contain only output to the console (\texttt{std::cout <<}):

\begin{lstlisting}
template <int dim>
void InterfaceProblem<dim>::run ()
{
  set_runtime_parameters();
  setup_system();
  lmfe.initialize_quadrature();

  //Memorize initial solution  
  Vector<double> initial_solution = solution;
  std::cout << std::endl;
  
  if (test_case == 1)
    { 
     ...
    }
  else if (test_case == 2)
    {
      for (unsigned int i=0; i < N_testcase2; ++i)
      {
        // Move y-position of circle at each step
        _yoffset = (double)i / (double)N_testcase2 * min_cell_vertex_distance;
        lmfe.LevelSetFunction()->set_y_offset (_yoffset);

        // Reset material_ids based on the new interface location 
        lmfe.set_material_ids (dof_handler, triangulation);
        
        std::cout << ...
        
        // Solve system with Newton solver
        newton_iteration (); 
        
        // Compute functional values (error norms)
        compute_functional_values(false);
        
        ...
        // Write solutions as *.vtk file
        lmfe.plot_vtk (dof_handler,fe,solution,i); 
      }           
    } // end test_case 2
}

\end{lstlisting}

\subsubsection*{\texttt{void initialize\_quadrature()} and level set function}

The first function of the class \texttt{LocModFE} that is used, is \texttt{void initialize\_quadrature()} in line 6, which initialises
the four quadrature formulas for the four reference patch types $\hat{P}_0,...,\hat{P}_3$, that can be accessed by means of 
\texttt{lmfe.compute\_quadrature(int femtype)} later on. In the lines 21 and 22, the vertical position of the circular interface is updated by means 
of the y-coordinate (\texttt{\_yoffset}) of the midpoint $x_m$ of the circle and then passed to the level set function of the class \texttt{LocModFE}. 
Remember that in this test case the interface is moved gradually upwards.

\subsubsection*{\texttt{void set\_material\_ids(...)} and \texttt{newton\_iteration()}}

Next, the function \texttt{void set\_material\_ids(...)} is called in line 25, which sets the colours for vertices and patches as explained above.
All the computations are then done within the function \texttt{newton\_iteration()} in line 30. The source code of this function itself contains no
content that is specific to the locally modified finite element method.
In fact the Newton solver is mostly copy and paste from \cite{Wi11_fsi_with_deal}.
The only modified functions that are called within \texttt{newton\_iteration()}
are the assembly of the system matrix and right-hand side, which will be discussed below and 
the function \texttt{set\_initial\_bc}, which has to be modified
in interface patches by calling\\ \texttt{lmfe.interpolate\_boundary\_values(...)}. After the 
Newton iteration, functional values are computed in 
the function \texttt{compute\_functional\_values(...)} that uses 
the modified function \texttt{lmfe.integrate\_difference\_norms(...)}. Finally, the results are written to a vtk 
file by \texttt{lmfe.plot\_vtk} in line 37, together with a mesh consisting of the sub-cells of the patches.

\subsubsection*{\texttt{assemble\_system\_matrix()}}

Within the function \texttt{newton\_iteration}, the functions \texttt{assemble\_system\_matrix()} and
\texttt{assemble\_system\_rhs()}
are called. We show here the prior exemplarily, the modifications in the assembly of the right-hand side are analogous:
\begin{lstlisting}
template <int dim>
void InterfaceProblem<dim>::assemble_system_matrix ()
{
  ...
  LocModFEValues<dim>* fe_values;

  //We initialize one LocModFEValue object for patch type 0 and one for patch 
  //types 1 to 3, due to the different number of integration points
  Quadrature<dim> quadrature_formula0 = lmfe.compute_quadrature(0);
  LocModFEValues<dim> fe_values0 (fe, quadrature_formula0,
	      _hierarchical, update_values | update_quadrature_points  |
	      update_JxW_values | update_gradients);

  Quadrature<dim> quadrature_formula1 = lmfe.compute_quadrature(1);
  LocModFEValues<dim> fe_values1 (fe, quadrature_formula1,
	      _hierarchical, update_values | update_quadrature_points  |
	      update_JxW_values | update_gradients);       
\end{lstlisting}

After some variable definitions that we have skipped here in line 4, we initialise a pointer\\ \texttt{LocModFEValues<dim>* fe\_values(...)}.
Depending on the patch type, this pointer will be set for each patch in the following loop to one of the objects \texttt{LocModFEValues<dim> fe\_values0(...)} 
(patch type $\hat{P}_0$) or 
\texttt{LocModFEValues<dim> fe\_values1} (patch type $\hat{P}_1,...,\hat{P}_3$) defined in the lines 11 and 16. We initialise
these two objects before the loop over all patches for efficiency reasons. Two different objects are needed as the local number of quadrature points is different 
for patch type $\hat{P}_0$ compared to the interface patch types.

Next, we start the loop over all patches, in which the local contribution to the global system matrix is computed.
Before we can compute the local basis functions and their gradients, we have to call the function 
\texttt{init\_FEM(...)}, that sets the patch type (\texttt{femtype}), 
the local mapping $\hat{T}_P$ (\texttt{M}) and the discrete level set function $\chi_h$ (\texttt{LocalDiscChi}), see line 25. Then, 
the quadrature formula that corresponds to the patch type is set in line 27 and one of the two objects 
of type \texttt{LocModFEValues}, that were initialised above, is chosen. The quadrature formula, as well as the patch type and the local mapping $\hat{T}_P$
are then passed to this object in line 34. Now, we are ready to compute the local basis functions, their gradients and the derivatives 
of the mapping $\hat{T}_P$, that are needed to compute the entries of the system matrix. This is 
done by \texttt{fe\_values->reinit(J)}
in line 38:\\

\begin{lstlisting}[firstnumber=19]
  for (unsigned int cell_counter = 0; cell!=endc; ++cell,++cell_counter)
    {
      local_matrix=0;
      
      //Set patch type (femtype), map T_P (M), local level set function (LocalDiscChi)
      //and list of nodes at the interface
      lmfe.init_FEM (cell,cell_counter,M,dofs_per_cell,femtype, LocalDiscChi, NodesAtInterface);

      Quadrature<dim> quadrature_formula = lmfe.compute_quadrature(femtype);
      const unsigned int   n_q_points      = quadrature_formula.size();

      //Choose one of the initialized objects for LocModFEValues      
      if (femtype==0) fe_values = &fe_values0;
      else fe_values = &fe_values1;

      fe_values->SetFemtypeAndQuadrature(quadrature_formula, femtype, M);

      std::vector<double> J(n_q_points); 
      //Now the shape functions on the reference patch are initialized
      fe_values->reinit(J);
\end{lstlisting}

Next, we have a loop over the quadrature points and over the local degrees of freedom as usual in a finite element program. In order 
to compute the diffusion coefficient $\kappa$ (\texttt{viscosity}), we use the discrete level set function $\chi_h$. The value of $\chi_h$
in the quadrature point $q$ is extracted from the vector \texttt{LocalDiscChi} by the function \texttt{lmfe.ComputeLocalDiscChi(...)} in line 50.
Note that it is important to use this discrete level set function for the assembly of matrix and right-hand side, as otherwise $\kappa$
would jump within a sub-element and the program would not 
be robust with respect to high-contrast coefficients. The remaining 
lines are {standard and very similar to many other \texttt{deal.II} tutorial
steps (e.g., deal.II-step-22)}:\\ 

\begin{lstlisting}[firstnumber=39]
      for (unsigned int q=0; q<n_q_points; ++q)
      {
         for (unsigned int k=0; k<dofs_per_cell; ++k)
          {
            phi_i_u[k]       = fe_values->shape_value (k, q);
            phi_i_grads_u[k] = fe_values->shape_grad (k, q);
          }
          
          //Get the domain affiliation to set the viscosity. 
          //This is based on the discrete level set function, such that
          //all quadrature points in a sub-cell lie in the same sub-domain 
          lmfe.ComputeLocalDiscChi(ChiValue, q, *fe_values, dofs_per_cell, LocalDiscChi);

          if (ChiValue < 0.)  viscosity = visc_1; //Subdomain Omega_1 (inside the circle) 
          else viscosity = visc_2; //Subdomain Omega_2 (outside the circle)
                
          //Compute matrix entries as in other deal.II program.
          for (unsigned int i=0; i<dofs_per_cell; ++i)
            {
            for (unsigned int j=0; j<dofs_per_cell; ++j)
              {
                local_matrix(j,i) += viscosity *
                    phi_i_grads_u[i] * phi_i_grads_u[j] * J[q] * quadrature_formula.weight(q);
              }
            }
      }
      
      //Write into global matrix
      ...
    }
    ...
}
\end{lstlisting}

Finally, we remark that besides the described function calls
no further modifications are necessary in comparison to any other standard 
FEM code or deal.II tutorial program.

\section{Conclusion and outlook}
\label{sec.conclusion}

In this paper, we have explained the implementation of the locally fitted finite
element method first proposed in \cite{FreiRichter2014} in detail. The underlying 
framework is based on the open-source finite element library deal.II, 
\cite{dealII85}.
Moreover, we have illustrated the performance of the method by means of 
two numerical tests. We have shown that iterative methods such as the 
CG method can be used to solve the arising linear systems of equations and 
analysed the performance of 
the linear iterative solvers with respect to mesh refinement and different 
anisotropies.

The method can be applied to simulate the Stokes or Navier-Stokes equations
with equal-order elements and pressure stabilisations. The only difficulty lies in the treatment of the 
anisotropic cells within the stabilisation terms. A solution for the Continuous Interior Penalty (CIP) stabilisation
has been proposed by \cite{FreiDiss}, \cite{FreiPressureStab}.

In order to obtain higher-order accuracy, the interface has to be resolved with higher order. This
can be achieved by using maps $\hat{T}_P$ of higher polynomial degrees. We would like to remark, however, that
this might lead to additional difficulties concerning the degeneration of the sub-elements
within the patches. A promising alternative is the use of so-called ``boundary value correction'' techniques at
the interface, see \cite{Burmanetal2018}.




Moreover, the locally modified FEM method has a natural extension to three space dimensions. The
mathematical, numerical, and algorithmic requirements are currently ongoing work.
Another desirable feature is the parallelisation of the
approach. Here, we do not assume major difficulties since the programming structure 
is similar to step-42 of the deal.II tutorial programs. As all 
the additions compared to a standard deal.II code are local on the patch level, this should 
in principle be possible without further difficulties.




\section{Acknowledgements}
The first author was supported 
 by the DFG Research Scholarship FR3935/1-1.
The third author gratefully acknowledges the travel support from
University College London (i.e., Eric Burman) for finalising this work.

\bibliographystyle{plainnat}


\end{document}